\newtheorem{thm}{Theorem}[section]
\newtheorem{lemm}[thm]{Lemma}
\newtheorem{rem}[thm]{Remark}
\theoremstyle{definition}
\def\Wick#1{\mathord{{:}{#1}{:}}}
\title{Stochastic quantization of the weighted exponential QFT}
\author{Seiichiro Kusuoka}
\address{Graduate School of Science, Kyoto University, Kitashirakawa-Oiwakecho, Sakyo-ku, Kyoto, 606-8502, Japan}
\email{kusuoka@math.kyoto-u.ac.jp}
\author{Hirotatsu Nagoji}
\address{Graduate School of Advanced Science and Engineering, Hiroshima University, 1-3-1 Kagamiyama, Higashi-Hiroshima City, Hiroshima, 739-8526, Japan}
\email{nagojihiro@hiroshima-u.ac.jp}
\date{}
\keywords{Euclidean quantum field theory, stochastic quantization, singular SPDE, sinh-Gordon model, Høegh-Krohn model.}
\subjclass[2020]{81S20, 60H15, 35Q40, 35R60}
\begin{document}

\begin{abstract}
We consider the stochastic quantization equation associated with the weighted exponential quantum field model (or the H\o egh-Krohn model) on the two dimensional torus. Unlike in the case of the usual (unweighted) exponential model, the drift term of the stochastic quantization equation can be both positive and negative, and that makes the equation more difficult to treat. We prove the unique existence of the time-global solution under a certain initial condition by a pathwise PDE argument in the so-called $L^2$-regime. We also see that this solution is properly associated with a Dirichlet form canonically constructed from the weighted exponential quantum field measure. 

\end{abstract}

\maketitle

\section{Introduction}
We consider the equation 
\begin{align}\label{eq1}
\partial_t \Phi_t = \frac{1}{2} (\Delta - 1)\Phi_t - \frac{1}{2}\int_{[-\alpha_0, \alpha_0]} \alpha \exp^{\Diamond}{(\alpha \Phi_t)}\nu(d\alpha) + \dot{W}_t
\end{align}
on the 2-dimensional torus $\Lambda \coloneqq \mathbb{T}^2 = (\mathbb{R}/2\pi\mathbb{Z})^2$, where $\nu$ is a finite nonnegative Borel measure on $[-\alpha_0,\alpha_0]$, $\alpha_0\in (0,\sqrt{8\pi})$ and $\dot{W}$ is a space time white noise or equivalently, $\{W_t\}_{t\ge 0}$ is a cylindrical Brownian motion on $L^2(\Lambda)$.
This is the stochastic quantization equation of the weighted exponential quantum field model, and its invariant probability measure is given by
\begin{align}
\mu^{(\nu)}(d\phi) = \frac{1}{Z^{(\nu)}}\exp{\left( - \int_{[-\alpha_0,\alpha_0]}  \left(\int_\Lambda \exp^{\Diamond}{(\alpha \phi)(x)}dx\right) \nu(d\alpha)    \right)} \mu_0(d\phi),\label{eq:measure}
\end{align}
where $\mu_0$ is the massive Gaussian free field defined as a Gaussian probability measure on the space of periodic distributions $\mathcal{D}'(\Lambda)$ with covariance $(1-\Delta)^{-1}$, $Z^{(\nu)}$ is the normalization constant and $\exp^{\Diamond}(\alpha \phi)$ denotes the Wick exponential formally given by
\begin{equation}\label{eq:wickexp}  \exp^{\Diamond}(\alpha \phi)(x) = \exp{\left(\alpha \phi(x) - \frac{\alpha^2}{2}\int |\phi^2(x)|\mu_0(d\phi)  \right)}.  
\end{equation}
Note that the integral in \eqref{eq:wickexp} is ill-defined because $\phi$ is a genuine distribution, not a function $\mu_0$-almost surely so this expression is only formal. However, it is known that if $\alpha^2<8\pi$, we can make sense of it by way of a renormalization procedure, see Section \ref{sec:pre} for the precise definition. When $\nu$ is a Dirac measure $\delta_\alpha$, the model is called the $\exp(\Phi)_2$-model or the H\o egh-Krohn model and known as one of the simplest rigorous (Euclidean) quantum field models. Inspired by the recent progress in the research of singular stochastic PDEs, there have been some works that study the stochastic quantization of the $\exp(\Phi)_2$-model by using PDE techniques such as \cite{G20,ORW21,HKK21a,HKK23}. Especially in \cite{HKK23}, the authors constructed the $\exp(\Phi)_2$-measure on the torus and proved the unique existence of the strong solution to the stochastic quantization equation for any $\alpha^2<8\pi$. See \cite{S74} or the references in the above-mentioned papers for the physical background of the model.
The weighted $\exp(\Phi)_2$-model, which is also called H\o egh-Krohn model, has also been studied in the physics literature (e.g. \cite{H71,AH74,FP77,AL04}) with a particular interest in the $\cosh(\Phi)_2$-model (or the sinh-Gordon model), which is the case $\nu =(\delta_{\alpha} + \delta_{-\alpha})/2$, see e.g. \cite{Pil98, Zam06}. In the case of the weighted model, the drift term in the stochastic quantization equation \eqref{eq1} can be both positive and negative unless we have $\mbox{supp}(\nu) \subset [0,\alpha_0]$ or $\mbox{supp}(\nu) \subset [-\alpha_0,0]$, and that makes some techniques usually used in the case of the unweighted models unavailable. Due to some difficulties like this, there have been less studies on the stochastic quantization of the weighted $\exp(\Phi)_2$-model. In \cite{BDV25}, the elliptic stochastic quantization of the infinite-volume sinh-Gordon model for $\alpha^2<4\pi$ (the so-called $L^2$-regime) is considered and the authors proved that the equation is well-posed and the sinh-Gordon quantum field measure satisfies the Osterwalder-Schrader axioms. See also \cite[Remark 1.18]{ORW21}, in which the authors constructed the solution to the parabolic stochastic quantization equation of the sinh-Gordon model on the torus for $\alpha^2< 8\pi/(3+2\sqrt2)$.

In this paper, we refine the argument in \cite{HKK21b} and study the stochastic quantization equation associated to the general weighted $\exp{(\Phi)}_2$-model on the torus for any finite nonnegative Borel measure $\nu$ on $[-\alpha_0,\alpha_0]$.
Our aim is to prove global well-posedness in the $L^2$-regime $\alpha_0^2 < 4\pi$ by a pathwise PDE argument. Then, we also construct a diffusion associated to the model in the $L^1$-regime $\alpha_0^2<8\pi$ by applying the general theory of Dirichlet form for SPDE developed in \cite{AR91} and see the connection between the two approaches as in \cite{HKK21a,HKK23}.
More precisely, we prove the following results. 
Since the construction of the Wick exponential $\exp^\Diamond(\alpha\cdot)$ involves renormalization, we introduce an approximation operator $P_N$, $N \in \mathbb{N}$ as follows.
   \begin{equation}\label{eq:appro}
       P_N f \coloneqq \sum_{l\in \mathbb{Z}^2, |l|\le A^N} \hat{f}(l) e_l,
   \end{equation}
   where $A>0$ will be chosen to be sufficiently large with dependence on $\alpha_0$ later in the proof of Lemma \ref{lem:stoch}, and we define the Fourier transformation of $f\in \mathcal{D}(\Lambda) \coloneqq C^\infty(\Lambda)$ by 
    \begin{equation}
        \hat{f}(l) \coloneqq \int_\Lambda f(x)e_l(x)dx,\ \ \ l\in\mathbb{Z}^2,
    \end{equation}
   where $e_l(x) \coloneqq \frac{1}{ 2\pi}e^{i l\cdot x}$. Let $H^s(\Lambda)$ ($s\in \mathbb{R}$) be the Sobolev spaces introduced in Section \ref{sec:not}.
\begin{thm}\label{thm1}
Let $\alpha_0^2<4\pi$, $\beta \in (\frac{\alpha_0^2}{4\pi},1)$ and let $\Phi^N$ be the solution to the equation
\begin{gather}
\begin{cases}
\partial_t \Phi^N_t = \frac{1}{2} (\Delta - 1)\Phi^N_t - \frac{1}{2} \int_{[-\alpha_0, \alpha_0]}\alpha\exp{(\alpha \Phi^N_t - \frac{\alpha^2}2C_N)}\nu(d\alpha) + P_N\dot{W}_t \label{eq1'} \\
\Phi^N_0 = P_N \xi + \eta, 
\end{cases}
\end{gather}
where $\xi$ is distributed according to the massive Gaussian free field measure $\mu_0$ with covariance $(1-\Delta )^{-1}$, and independent of $\{W_t\}_{t\ge 0}$, $C_N$ is the renormalization constant defined in \eqref{eq:const} and $\eta \in H^{2-\beta}(\Lambda)$ is non-random. Then, $\{\Phi^N\}_{N\in \mathbb{N}}$ converges to some $\Phi$ in $C([0,T];H^{-\beta}(\Lambda))$ almost surely as $N\rightarrow \infty$.
\end{thm}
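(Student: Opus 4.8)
My plan is to employ a Da Prato--Debussche type decomposition and reduce the convergence of $\{\Phi^N\}$ to a pathwise analysis of a remainder equation with better regularity. Let $Z^N$ solve the linear (Ornstein--Uhlenbeck) equation $\partial_t Z^N = \frac12(\Delta-1)Z^N + P_N\dot W$ with $Z^N_0 = P_N\xi$, and set $Y^N := \Phi^N - Z^N$. Since both $P_N\dot W$ and $P_N\xi$ live in the finite Fourier band $\{|l|\le A^N\}$ from \eqref{eq:appro}, the process $Z^N$ is spatially smooth, the renormalized Wick exponential $W^N_\alpha := \exp(\alpha Z^N - \tfrac{\alpha^2}2 C_N)$ (with $C_N$ as in \eqref{eq:const}) is a smooth nonnegative function, and $Y^N$ solves
\begin{equation*}
\partial_t Y^N_t = \tfrac12(\Delta-1)Y^N_t - \tfrac12\int_{[-\alpha_0,\alpha_0]}\alpha\, e^{\alpha Y^N_t}\,W^N_\alpha(t)\,\nu(d\alpha),\qquad Y^N_0=\eta,
\end{equation*}
an equation with smooth-in-space coefficients for each fixed $N$. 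The goal is to derive bounds on $Y^N$ that are uniform in $N$ and strong enough to pass to the limit.

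The probabilistic input, which I expect to be packaged in Lemma \ref{lem:stoch}, is the convergence of the enhanced data: $Z^N\to Z$ in $C([0,T];H^{-\beta})$ and, crucially, $W^N_\alpha\to W_\alpha$ as $N\to\infty$ uniformly over $\alpha\in[-\alpha_0,\alpha_0]$ in $C([0,T];H^{-\beta})$, together with control of the total masses $W^N_\alpha(\Lambda)$. Here the hypotheses $\alpha_0^2<4\pi$ and $\beta>\frac{\alpha_0^2}{4\pi}$ enter precisely: the former places us in the $L^2$-regime, where the Gaussian multiplicative chaos $W_\alpha$ is a genuine nonnegative random measure with finite second moment, and the latter guarantees $W_\alpha\in H^{-\beta}$ for every admissible $\alpha$. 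The constant $A$ is chosen large enough, with dependence on $\alpha_0$, to make these estimates summable and to upgrade convergence in probability to almost sure convergence along the full sequence.

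The core is the pathwise a priori estimate on $Y^N$, and this is where the sign-indefiniteness of the drift must be handled. Testing the $Y^N$-equation against $Y^N$ and using $\langle(\Delta-1)Y,Y\rangle=-\|Y\|_{H^1}^2$, the nonlinear contribution to $\frac12\frac{d}{dt}\|Y^N\|_{L^2}^2$ equals $-\frac12\int_{[-\alpha_0,\alpha_0]}\big(\int_\Lambda \alpha\,Y^N e^{\alpha Y^N}\,W^N_\alpha\,dx\big)\nu(d\alpha)$. The elementary bound $-u e^{u}\le e^{-1}$ applied with $u=\alpha Y^N(x)$, combined with the nonnegativity of the measure $W^N_\alpha$, shows that this term is bounded above by $\frac{e^{-1}}2\int W^N_\alpha(\Lambda)\,\nu(d\alpha)$ irrespective of the sign of $\alpha$; this replaces the monotonicity and comparison arguments available only in the one-sided case, and yields a uniform $L^\infty_tL^2_x\cap L^2_tH^1_x$ bound. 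To upgrade this to a uniform bound in $C([0,T];H^{2-\beta})$ --- the regularity needed to make the products $e^{\alpha Y}W_\alpha$ meaningful in the limit --- I would feed the estimate into the Duhamel formula and use the smoothing of $e^{\frac{t}2(\Delta-1)}$, exploiting that $\beta<1$ so that $H^{2-\beta}$ embeds into $C^0$ and that the product of $H^{2-\beta}$ with $H^{-\beta}$ lands back in $H^{-\beta}$, while $e^{\alpha Y}$ is controlled by the chain rule in Besov spaces once $Y$ is bounded. I expect this regularity bootstrap, carried out uniformly in $\alpha$ and $N$, to be the main technical obstacle, since the gain of two derivatives from $H^{-\beta}$ to $H^{2-\beta}$ is borderline non-integrable in time and must be absorbed using time-weighted norms or a slightly lower intermediate exponent $s\in(1,2-\beta)$.

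Finally, I would prove convergence by showing that $\{Y^N\}$ is Cauchy. Writing $D:=Y^N-Y^M$ and performing the same energy computation for the difference equation, the part of the nonlinearity retaining the common chaos factor $W^M_\alpha$ contributes $-\frac12\int_{[-\alpha_0,\alpha_0]}\big(\int_\Lambda \alpha\,(e^{\alpha Y^N}-e^{\alpha Y^M})\,W^M_\alpha\,D\,dx\big)\nu(d\alpha)$, which is nonpositive for every $\alpha$ because $\alpha\big(e^{\alpha y_1}-e^{\alpha y_2}\big)(y_1-y_2)\ge 0$ for all real $\alpha,y_1,y_2$; thus, remarkably, the non-monotone drift still produces a dissipative difference term. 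The remaining part is driven by $W^N_\alpha-W^M_\alpha$ and is estimated by the uniform $H^{2-\beta}$-bound on $e^{\alpha Y^N}$ together with the convergence $\|W^N_\alpha-W^M_\alpha\|_{H^{-\beta}}\to 0$ from Lemma \ref{lem:stoch}. A Gronwall argument then gives $Y^N\to Y$ in $C([0,T];H^{2-\beta})$ almost surely, and since $Z^N\to Z$ in $C([0,T];H^{-\beta})$, we conclude $\Phi^N=Y^N+Z^N\to Y+Z=:\Phi$ in $C([0,T];H^{-\beta})$, as claimed.
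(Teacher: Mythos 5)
Your decomposition $\Phi^N = Z^N + Y^N$ is exactly the paper's decomposition $\Phi^N = X^N + Y^N$, and your observation that $\alpha\,(e^{\alpha y_1}-e^{\alpha y_2})(y_1-y_2)\ge 0$ makes the difference of two solutions dissipative is precisely the mechanism behind the paper's uniqueness proof (Lemma \ref{lem:unique}, there implemented with an $\arctan$ weight). Your $L^2$-energy estimate via $-ue^u\le e^{-1}$ and the nonnegativity of $\mathcal{X}^{\alpha,N}$ is also sound for each fixed $N$. The genuine gap is in the step you yourself flag as "the main technical obstacle": upgrading the uniform $L^\infty_tL^2_x\cap L^2_tH^1_x$ bound to a bound strong enough to handle the products $e^{\alpha Y^N}\mathcal{X}^{\alpha,N}$. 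Since $\mathcal{X}^{\alpha,N}$ is, uniformly in $N$, only a nonnegative distribution in $H^{-\beta}(\Lambda)$, the only available product estimate (Lemma \ref{lem:prod}) requires $\|e^{\alpha Y^N}\|_{C(\Lambda)}$; an $L^2_x$ or $H^1_x$ bound on $Y^N$ gives no control whatsoever on $\sup_x e^{\alpha Y^N}$, because the exponential is not dominated by any polynomial norm (and $H^1\not\hookrightarrow L^\infty$ in dimension two). Your Duhamel bootstrap is therefore circular: to produce a uniform $H^{2-\beta}\subset C^0$ bound you must first put the source in $L^r_TH^{-\beta}$, which already requires the uniform $C^0$-type control of $e^{\alpha Y^N}$ you are trying to prove. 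The same circularity undermines your Cauchy/Gronwall step, which explicitly invokes "the uniform $H^{2-\beta}$-bound on $e^{\alpha Y^N}$" that was never established.

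What the paper does instead, and what your proposal is missing, is Lemma \ref{lem:estey}: derive the parabolic inequality satisfied by $e^{\gamma Y^N}$ itself for $\gamma=\pm\alpha_0$. In that inequality the contributions of $\alpha$ with the same sign as $\gamma$ are nonpositive and can be dropped, while opposite-sign $\alpha$ produce $e^{(\gamma+\alpha)Y^N}\mathcal{X}^{\alpha,N}$ with the strictly smaller exponent $|\gamma+\alpha|=\alpha_0-|\alpha|$; this permits the interpolation $\|e^{(\gamma+\alpha)Y^N}\|\le\|e^{\gamma Y^N}\|^{1-|\alpha|/\alpha_0}$ and absorption by Young's inequality, at the price of the stochastic quantities $\|\alpha\,\mathcal{X}^{\alpha,N}\|^{p(\alpha)}_{L^{p(\alpha)}_TH^{-1+\epsilon}}$ with the $\alpha$-dependent exponent $p(\alpha)=p\alpha_0/|\alpha|$, which blows up as $\alpha\to 0$. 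Controlling these uniformly in $N$ is the content of Lemma \ref{lem:stoch}, whose hypercontractivity argument closes precisely because $\alpha^2(p(\alpha)-1)=p\alpha_0|\alpha|-\alpha^2\le p^2\alpha_0^2/4<4\pi$; this, and not merely the existence of second moments of the multiplicative chaos, is where the hypothesis $\alpha_0^2<4\pi$ enters. Your proposal anticipates neither the $e^{\pm\alpha_0 Y^N}$ equation nor the $|\alpha|$-dependent integrability structure of the stochastic input, so the a priori estimate it rests on cannot be closed with the tools you list. Finally, note that the paper completes the argument by compactness (Lemmas \ref{lem:esty} and \ref{lem:conv}) combined with uniqueness of the limit equation, rather than by a Cauchy-in-$N$ estimate, which avoids ever needing Lipschitz-in-$Y$ control of the exponential nonlinearity.
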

\begin{rem}\label{rem:thm1}
In view of the fact that $\mu^{(\nu)}$ is absolutely continuous with respect to $\mu_0$ (see Section \ref{sec:dirichlet}), Theorem~\ref{thm1} especially implies that the solution $\Phi^N(\phi)$ of
    \begin{align*}
\begin{cases}
\partial_t \Phi^N_t = \frac{1}{2} (\Delta - 1)\Phi^N_t - \frac{1}{2} \int_{[-\alpha_0, \alpha_0]}\alpha\exp{(\alpha \Phi^N_t - \frac{\alpha^2}2C_N)}\nu(d\alpha) + P_N\dot{W}_t \label{eq1} \\
\Phi^N_0 = P_N \phi, 
\end{cases}
\end{align*}    
converges to some $\Phi^N(\phi)$ in $C([0,T];H^{-\beta}(\Lambda))$ almost surely as $N\rightarrow \infty$ for $\mu^{(\nu)}$-almost every $\phi$.
\end{rem}

\begin{rem}
The limit $\Phi$ in Theorem \ref{thm1} can be decomposed by $\Phi= X + Y$, where $X,Y$ are the unique mild solutions to \eqref{eq:defx} and \eqref{eqY2}, respectively, see Remark \ref{rem:lemconv}. 
    
\end{rem}

\begin{rem}\label{rem:thm1-2}
If we have $\mbox{supp}(\nu) \subset [-\alpha_0, 0]$ or $\mbox{supp}(\nu)\subset [0,\alpha_0]$, many of the known results in the case of the unweighted model (which is the case $\nu =\delta_\alpha$) should be extended to our setting in a straightforward way. For example, all the results in \cite{HKK23} can be extended to our setting and in particular, the convergence of $\Phi^N$ in Theorem \ref{thm1} holds in the full $L^1$-regime $\alpha_0^2<8\pi$ in that case, see Remark \ref{rem:estey2}.   
\end{rem}

\begin{thm}\label{thm2}
   Let $\alpha_0^2<8\pi$, $\beta\in (0,1)$ and $E \coloneqq H^{-\beta}(\Lambda)$. Let $\mathcal{E}$ be a pre-Dirichlet form defined by
    \begin{equation}
        \mathcal{E}\left(F, G\right) \coloneqq \frac{1}2 \int_{E} \left\langle \nabla F (\phi) , \nabla G(\phi) \right\rangle_{L^2(\Lambda)} \mu^{(\nu)}(d\phi),\ \ F,G\in \mathfrak{F}C^\infty_b \subset L^2(\mu^{(\nu)}),
    \end{equation}
    where $\mathfrak{F}C^\infty_b$ is the space of all bounded smooth cylindrical functionals and $\nabla$ denotes the $ L^2(\Lambda)$-derivative, see Section \ref{sec:dirichlet} for the more precise definitions. Then, there exists a suitable $\beta \in (0,1)$ such that $(\mathcal{E},\mathfrak{F}C_b^\infty)$ is closable in $L^2(E,\mu^{(\nu)})$ and its closure $(\bar{\mathcal{E}},\mathcal{D}(\bar{\mathcal{E}}))$ is a quasi-regular Dirichlet form. Moreover, if $\alpha_0^2<4\pi$ and $\beta\in (\frac{\alpha_0^2}{4\pi},1)$, $\bar{\mathcal{E}}$ is properly associated with the process $\Phi$ constructed in Remark \ref{rem:thm1}.
\end{thm}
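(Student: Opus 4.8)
The plan is to invoke the general theory of quasi-regular Dirichlet forms on the infinite-dimensional state space $E = H^{-\beta}(\Lambda)$ developed in \cite{AR91}, carrying out three tasks in turn: closability of $(\mathcal{E}, \mathfrak{F}C_b^\infty)$, quasi-regularity of its closure, and the identification of the associated Markov process with the solution $\Phi$ of Theorem~\ref{thm1}.

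For closability, write $\mu^{(\nu)} = (Z^{(\nu)})^{-1} e^{-V}\mu_0$ with the nonnegative potential $V(\phi) = \int_{[-\alpha_0,\alpha_0]} \int_\Lambda \exp^{\Diamond}(\alpha\phi)(x)\, dx\, \nu(d\alpha)$. I would first establish an integration-by-parts formula for $\mu^{(\nu)}$ along every direction $k$ in the dense subset $C^\infty(\Lambda) \subset L^2(\Lambda)$, with logarithmic derivative $\beta_k = \beta_k^0 - \partial_k V$, where $\beta_k^0$ is the Gaussian logarithmic derivative of $\mu_0$ (a centered Gaussian random variable, since $(1-\Delta)k$ is smooth) and $\partial_k V(\phi) = \int_{[-\alpha_0,\alpha_0]} \int_\Lambda \alpha\, \exp^{\Diamond}(\alpha\phi)(x) k(x)\, dx\, \nu(d\alpha)$. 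The only nontrivial input is $\beta_k \in L^2(\mu^{(\nu)})$; since $V \ge 0$ forces the density $e^{-V}$ to be bounded by $1$, it suffices to bound $\partial_k V$ in $L^2(\mu_0)$, which follows from the moment estimates for the Wick exponential established in Section~\ref{sec:pre} and is valid throughout the $L^1$-regime $\alpha_0^2 < 8\pi$. The closability criterion of \cite{AR91} then applies, and the closure is a Dirichlet form because the gradient form $\frac12 \int \langle \nabla F, \nabla G\rangle_{L^2(\Lambda)}\, d\mu^{(\nu)}$ is contractive under normal contractions.

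For quasi-regularity I would verify the defining conditions (an $\mathcal{E}$-nest of compact sets, separation of points by $\mathcal{D}(\bar{\mathcal{E}})$, and the local property) as in \cite{AR91}. The essential point is tightness: using the compactness of the embedding $H^{-\beta_0}(\Lambda) \hookrightarrow H^{-\beta}(\Lambda)$ for $\beta_0 < \beta$ together with the $\mu_0$-integrability (hence $\mu^{(\nu)}$-integrability, the density being bounded) of $\|\phi\|_{H^{-\beta_0}}$, the sublevel sets of $\phi \mapsto \|\phi\|_{H^{-\beta_0}}$ are compact in $E$ and their complements have arbitrarily small capacity, which produces the required nest. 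The freedom in the choice of a \emph{suitable} $\beta$ enters precisely here, to reconcile the integrability of the reference measure with the compactness of the embedding. This yields a quasi-regular Dirichlet form $(\bar{\mathcal{E}}, \mathcal{D}(\bar{\mathcal{E}}))$ and hence, by \cite{AR91}, an associated $\mu^{(\nu)}$-symmetric diffusion $M$.

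The last stage, proper association, is where I expect the main difficulty. For $\alpha_0^2 < 4\pi$ and $\beta \in (\frac{\alpha_0^2}{4\pi}, 1)$, following \cite{HKK21a,HKK23}, the strategy is to show that $\Phi$ solves the martingale problem for the generator $L$ of $\bar{\mathcal{E}}$: apply It\^o's formula to $F(\Phi^N_t)$ for $F \in \mathfrak{F}C_b^\infty$, whose drift involves exactly $-\frac12 \int_{[-\alpha_0,\alpha_0]} \alpha\, \exp^{\Diamond}(\alpha\Phi^N_t)\, \nu(d\alpha)$, and pass to the limit $N \to \infty$ using the almost sure convergence in $C([0,T]; H^{-\beta}(\Lambda))$ from Theorem~\ref{thm1} and the $L^2$-regime moment bounds. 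Combined with the $\mu^{(\nu)}$-invariance of $\Phi$ (obtained by passing to the limit in the approximating invariant measures), a Markov-uniqueness argument as in \cite{AR91} then identifies the transition semigroup of $\Phi$ with an $\mathcal{E}$-version of the Dirichlet form semigroup, i.e. proves that $\bar{\mathcal{E}}$ is properly associated with $\Phi$. The restriction $\alpha_0^2 < 4\pi$ is needed precisely to control, in $L^2$ and uniformly in $N$, the \emph{sign-indefinite} nonlinear drift $\int \alpha\, \exp^{\Diamond}(\alpha\phi)\, \nu(d\alpha)$; it is this change of sign, absent in the unweighted model, that makes both the passage to the limit and the generator identification delicate.
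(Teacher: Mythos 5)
Your overall skeleton (integration by parts for $\mu^{(\nu)}$, closability and quasi-regularity via \cite{AR91}, identification of the associated diffusion with the process of Theorem~\ref{thm1}) matches the paper's, but your closability step contains a genuine error. You reduce the requirement $\beta_k\in L^2(\mu^{(\nu)})$ to a bound on $\partial_k V$ in $L^2(\mu_0)$, arguing that the density $e^{-V}$ is bounded by $1$, and you claim this works ``throughout the $L^1$-regime $\alpha_0^2<8\pi$''. That is false precisely in the range $4\pi\le\alpha_0^2<8\pi$ that the theorem covers: the Wick exponential has no second moment under $\mu_0$ there, since $\int_{\mu_0}\exp^{\Diamond}(\alpha\phi)(x)\exp^{\Diamond}(\alpha\phi)(y)\,\mu_0(d\phi)=e^{\alpha^2 C(x,y)}\simeq |x-y|^{-\alpha^2/2\pi}$ with $C$ the covariance of $\mu_0$, and this is not integrable on $\Lambda\times\Lambda$ once $\alpha^2\ge 4\pi$ --- this failure is exactly why $\alpha^2<4\pi$ is called the $L^2$-regime. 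So $\partial_k V\notin L^2(\mu_0)$ in general, and the bound $e^{-V}\le 1$ alone cannot rescue the argument. The paper (following the proof of \cite[Proposition 6.1]{HKK23}) instead estimates the drift term of $\mathcal{L}^{(\nu)}F$ directly in $L^2(E,\mu^{(\nu)})$, using the weight in an essential way: by positivity of the Wick exponential, $\bigl|\bigl\langle \int \alpha \exp^{\Diamond}(\alpha \phi)\nu(d\alpha) , \nabla F(\phi)\bigr\rangle\bigr|\lesssim_F \alpha_0\, V(\phi)$, and then one uses the pointwise bound $V(\phi)^2 e^{-V(\phi)}\le \sup_{x\ge 0}x^2e^{-x}<\infty$. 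Trading the quadratic growth of the drift against the exponential decay of the density is the whole content of the $L^1$-regime assertion, and it is absent from your proposal.

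Your proper-association step also runs in the opposite direction from the paper's and invokes a tool you do not have. You propose to show that $\Phi$ solves the martingale problem for the generator and then to conclude by ``a Markov-uniqueness argument as in \cite{AR91}''; but \cite{AR91} supplies existence of weak solutions via Dirichlet forms, not Markov uniqueness, and Markov uniqueness for such singular drifts is a separate and delicate problem that is nowhere established here. The paper argues the other way: by a modification of \cite[Section 5]{HKK21a}, the diffusion $(\Psi_t,\mathbb{Q}_\phi)$ properly associated with the quasi-regular form $\bar{\mathcal{E}}$ (which exists by general theory) is shown to be a weak solution of the stochastic quantization equation driven by a cylindrical Brownian motion $\mathcal{W}$ constructed on its own probability space; then the convergence/uniqueness result of Lemma~\ref{lem:conv} applied with the noise $\mathcal{W}$ (cf.\ Remark~\ref{rem:thm1}) forces $\Psi_t=\Phi_t(\phi)$ $\mathbb{Q}_\phi$-a.s.\ for $\mu^{(\nu)}$-a.e.\ $\phi$, which is exactly proper association. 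Your use of the restriction $\alpha_0^2<4\pi$ and $\beta\in(\frac{\alpha_0^2}{4\pi},1)$ in this last step is consistent with the paper, but the identification argument needs to be rebuilt in this direction to close the proof.
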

\begin{rem}
    We say that a process $\Phi(\phi),\ \phi\in E$ is properly associated with a Dirichlet form $\bar{\mathcal{E}}$ when the Markov semigroup $P_t$ on $L^2(\mu^{(\nu)})$ corresponding to $\bar{\mathcal{E}}$ is given by $P_t F (\phi) = \mathbb{E}[F(\Phi_t(\phi)) ]$ for any bounded measurable function $F$ on $E$. Note that any quasi-regular Dirichlet form admits a diffusion process properly associated to it, see \cite[Theorem 3.5]{MR92}, for example.
\end{rem}
The organization of the present paper is as follows.
In Section \ref{sec:pre}, we recall some facts on the Sobolev spaces, heat kernel and Wick exponentials that are used in later sections. In Section \ref{sec:gwp} we prove Theorem~\ref{thm1}, where the proofs of some stochastic estimates are postponed to Section \ref{sec:stoch}. Finally, the proof of Theorem~\ref{thm2} is explained in Section \ref{sec:dirichlet}.

\subsection{Notations}\label{sec:not}
We use the following notations in this paper.
\begin{itemize}
    \item 
    We write $a \lesssim b$ when $a\le C b$ holds for some $C>0$ independent of the variables under consideration. When we want to emphasize the dependency of $C$ on the variable $x$, we write $a \lesssim_x b$. We also write $a \simeq b$ when we have $a \lesssim b$ and $a \gtrsim b$.
    \item 
    We write $\Lambda \coloneqq (\mathbb{R}/2\pi\mathbb{Z})^2$ and $\mathcal{D}(\Lambda) \coloneqq C^\infty(\Lambda)$. Let $\mathcal{D}'(\Lambda)$ be the topological dual of $\mathcal{D}(\Lambda)$.
   \item 
   Let $H^s (\Lambda)$ ($s\in \mathbb{R}$) %and $B_{p,q}^{s}(\Lambda)$ 
   be the Sobolev spaces, see Section \ref{sec:besov} for the definition.
   \item 
We often use shorthand notations such as $L_T^pH^{s} \coloneqq L^p([0,T];H^s(\Lambda))$ and $L_{T,\alpha_0}^pH^s \coloneqq L^p([0,T]\times [-\alpha_0,\alpha_0],\lambda_T\otimes\nu;H^s(\Lambda))$, where $\lambda_T$ is the Lebesgue measure on $[0,T]$.
   \item The approximation operator $P_N$ ($N \in \mathbb{N}$) is defined by \eqref{eq:appro}.
\item
For $k\in\mathbb{N}$ and $c\in\mathbb{R}$, we define the $k$-th Hermite polynomial $H_k (x;c)$ by
\begin{equation}\label{herm}
e^{tx-\frac{1}{2}ct^2} = \sum_{k=0}^\infty \frac{t^k}{k!} H_k(x;c),\ \ \ t,x\in\mathbb{R}. 
\end{equation}   
\end{itemize}
\section{Preliminaries}\label{sec:pre}

\subsection{Sobolev spaces and heat kernel}\label{sec:besov}
Sobolev spaces $H^s(\Lambda)$ ($s\in \mathbb{R}$) are defined by the completion of the space $\mathcal{D}(\Lambda) = C^\infty(\Lambda)$ under the norm
\begin{equation}
    \| f \|_{H^{s}(\Lambda)} \coloneqq \| (1-\Delta)^{\frac{s}{2}}f\|_{L^2(\Lambda)} = \left(\int_\Lambda  |(1-\Delta)^{\frac{s}{2}}f(x)|^2     dx \right)^{\frac{1}2}, \ \ \ f\in \mathcal{D}(\Lambda). 
\end{equation}
%Then, $H^s(\Lambda)$-norm is known to be equivalent to the $B_{2,2}^s(\Lambda)$-norm, where for $p, q \ge 1$, we define
%\begin{equation}
 %   \| f \|_{B_{p,q}^s(\Lambda)} \coloneqq \| \{ 2^{m\alpha} \Delta_m f \}_{m\in\mathbb{N}\cup \{0\}}\|_{l^q(L^p(\Lambda))}.  
%\end{equation}
%Here, $\Delta_m,\ m\in \mathbb{N}\cup\{0\}$ is the Littlewood-Paley block defined by
%\begin{align*}
%    \Delta_m f \coloneqq \sum_{l\in\mathbb{Z}^2} \rho_m(l) \hat{f}(l)e_l ,
%\end{align*}
%where we used some notations introduced in Section \ref{sec:not} and we write $\rho_m(x)\coloneqq \rho(2^{-m}x),\ m\ge 1$ and $\rho_0(x) \coloneqq \chi(x)$ for any  pair of functions $(\chi,\rho)$ such that
%\begin{itemize}
%    \item 
%    $\mbox{supp}(\chi) \subset B(4)\coloneqq \{x\in \mathbb{R}^2\ ;\ |x|\le 4   \}$, $\mbox{supp}(\rho)\subset B(4)\backslash B(1)$;
%    \item 
%    $\chi(x)+\sum_{m=1}^\infty \rho(2^{-m} x) = 1$ for any $x\in\mathbb{R}^2$.
%\end{itemize}
%Besov spaces $B_{p,q}^s(\Lambda)$ are defined as the space of all distributions with finite $B_{p,q}^s(\Lambda)$-norm. See $\cdots$ for more about the Besov spaces.

We need to prepare the following lemmas for Section \ref{sec:gwp}. The proofs can be found in \cite[Theorem 3.4 and Proposition A.3]{HKK21a}.
\begin{lemm}\label{lem:prod}
    Let $s>0$. For any $f \in \mathcal{D}(\Lambda)$ and non-negative $g\in \mathcal{D}_+(\Lambda)$, it holds that 
    \[ \| fg \|_{H^{-s}(\Lambda)} \lesssim \| f \|_{C(\Lambda)} \| g \|_{H^{-s}(\Lambda)}  .\]
    In particular, when we define $H^{-s}_+(\Lambda)$ as the set of all non-negative distributions belonging to $H^{-s}(\Lambda)$, the product $fg$ for $f\in C(\Lambda)$ and $g\in H_+^{-s}(\Lambda)$ is a well-defined distribution by the unique continuous extension of the map 
    \[ C(\Lambda)\times H_+^{-s}(\Lambda) \supset  \mathcal{D}(\Lambda) \times \mathcal{D}_+(\Lambda) \ni (f,g) \mapsto fg \in H^{-s}(\Lambda).  \]
\end{lemm}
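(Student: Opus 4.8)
The plan is to prove the inequality by representing the $H^{-s}$-norm through the integral kernel of $(1-\Delta)^{-s}$ and exploiting the \emph{positivity of that kernel}, which is exactly what allows the bound to hold with only the uniform norm of $f$ and, crucially, for \emph{every} $s>0$. Since $f,g\in\mathcal{D}(\Lambda)$ the product $fg$ is a smooth function, so no issue of definition arises, and by self-adjointness of $(1-\Delta)^{-s/2}$ one may write
\begin{equation}
\|fg\|_{H^{-s}(\Lambda)}^2 = \bigl\langle fg,\,(1-\Delta)^{-s}(fg)\bigr\rangle_{L^2(\Lambda)} = \int_\Lambda\int_\Lambda f(x)g(x)\,G(x-y)\,f(y)g(y)\,dx\,dy,
\end{equation}
where $G$ denotes the periodic integral kernel of the operator $(1-\Delta)^{-s}$ on $\Lambda$.

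The key step is to establish $G\ge 0$. For this I would invoke the subordination identity
\begin{equation}
(1-\Delta)^{-s} = \frac{1}{\Gamma(s)}\int_0^\infty t^{s-1}e^{-t}\,e^{t\Delta}\,dt,\qquad s>0,
\end{equation}
together with the fact that the heat semigroup $e^{t\Delta}$ on the torus has a strictly positive kernel (a periodized Gaussian). As $t^{s-1}e^{-t}\ge 0$, the kernel $G$ is a non-negative superposition of non-negative kernels, hence $G\ge 0$. Given this, the non-negativity $g\ge 0$ lets me control the integrand pointwise by $f(x)f(y)\le\|f\|_{C(\Lambda)}^2$ times the non-negative quantity $g(x)G(x-y)g(y)$, so that
\begin{equation}
\|fg\|_{H^{-s}(\Lambda)}^2 \le \|f\|_{C(\Lambda)}^2 \int_\Lambda\int_\Lambda g(x)\,G(x-y)\,g(y)\,dx\,dy = \|f\|_{C(\Lambda)}^2\,\|g\|_{H^{-s}(\Lambda)}^2,
\end{equation}
which is the asserted estimate.

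For the final assertion I would extend the bilinear map by continuity: approximate $f\in C(\Lambda)$ uniformly by $f_n\in\mathcal{D}(\Lambda)$, and approximate $g\in H^{-s}_+(\Lambda)$ in the $H^{-s}(\Lambda)$-norm by $g_n\in\mathcal{D}_+(\Lambda)$ using a \emph{positivity-preserving} smoothing, for instance $g_n=e^{\frac1n\Delta}g$ (whose kernel is positive, so $g_n\ge 0$ and $g_n\to g$ in $H^{-s}$) or the operators $P_N$. The estimate above then passes to the limit and shows that $(f,g)\mapsto fg$ extends to a bounded bilinear map on $C(\Lambda)\times H^{-s}_+(\Lambda)$, the limit being independent of the chosen approximating sequences.

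The main obstacle is really a single point: everything rests on the positivity $G\ge 0$ of the resolvent kernel, and once that is secured the estimate is immediate. The only other step requiring genuine care is the approximation of $g$ in the extension argument, which \emph{must} preserve non-negativity; if one approximated $g$ by arbitrary smooth functions the bilinear bound with the uniform norm of $f$ would simply fail, so a positivity-preserving smoother is essential here.
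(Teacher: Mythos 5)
Your proof of the main estimate is correct, and in fact gives the inequality with constant $1$: the subordination identity does yield a nonnegative (indeed integrable, since $\int_\Lambda G=1$) kernel $G$ for $(1-\Delta)^{-s}$ on the torus, and the pointwise bound $f(x)f(y)\,g(x)g(y)G(x-y)\le\|f\|_{C(\Lambda)}^2\,g(x)g(y)G(x-y)$ finishes the argument. Note that the paper itself does not prove this lemma but defers to \cite[Theorem 3.4 and Proposition A.3]{HKK21a}, where the estimate is established on the Besov scale $B^{-s}_{p,q}$ by running the same positivity mechanism through the heat semigroup (pointwise $|e^{t\Delta}(fg)|\le\|f\|_{C(\Lambda)}e^{t\Delta}g$ for $g\ge0$, combined with the heat-flow characterization of negative-regularity norms). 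Your version packages the positivity into the quadratic form of $(1-\Delta)^{-s}$, which is cleaner but intrinsically Hilbertian ($p=q=2$); the cited route buys the general $(p,q)$ statement. For the lemma as stated this is a perfectly adequate, and genuinely different, self-contained argument.

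Two points in your extension step need repair, one small and one substantive. The small one: the parenthetical alternative ``or the operators $P_N$'' is wrong, because $P_N$ is a sharp Fourier cutoff whose kernel (a two-dimensional Dirichlet kernel) changes sign, so $P_Ng\ge0$ can fail for $g\ge0$; by your own correct closing remark such a smoother is unusable here, and only the heat semigroup $e^{\frac1n\Delta}$ (or convolution with a nonnegative mollifier) qualifies. The substantive one: ``the estimate passes to the limit'' does not by itself produce the extension, because your bound holds only on the cone of nonnegative $g$'s, which is not a linear subspace. In the telescoping
\begin{equation*}
f_ng_n-f_mg_m=(f_n-f_m)g_n+f_m(g_n-g_m),
\end{equation*}
the difference $g_n-g_m$ is signed, and for signed $h$ the bound $\|fh\|_{H^{-s}}\lesssim\|f\|_{C(\Lambda)}\|h\|_{H^{-s}}$ is false (exactly as you observe in your last sentence); hence neither the Cauchy property of $f_ng_n$ nor the independence of the limit from the approximating sequences follows from your estimate alone. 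A standard repair: for a fixed smooth $f_m$ one has $\|f_mh\|_{H^{-s}}\lesssim\|f_m\|_{C^k(\Lambda)}\|h\|_{H^{-s}}$ for all $h\in H^{-s}(\Lambda)$ and integer $k>s$ (by duality and the multiplication estimate $C^k\times H^s\to H^s$), which controls the cross term for fixed $m$; combining this with your cone estimate in an $\epsilon/3$-argument gives both Cauchyness and uniqueness of the limit, and the same two ingredients yield continuity of the extended map on $C(\Lambda)\times H^{-s}_+(\Lambda)$. Alternatively, one can sidestep sequences altogether: a nonnegative distribution $g\in H^{-s}_+(\Lambda)$ is a finite positive measure, so $fg$ can be defined directly as the measure $f\,dg$, and the bound $\|fg\|_{H^{-s}}\le\|f\|_{C(\Lambda)}\|g\|_{H^{-s}}$ follows by lower semicontinuity of the $H^{-s}$-norm along the distributional convergence $f_n\,e^{\frac1n\Delta}g\to fg$.
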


\begin{lemm}\label{lem:schauder}
    Let $u$ be the mild solution in $\mathcal{D}'(\Lambda)$ to
    \begin{align*}
\begin{cases}
\partial_t u = \frac{1}{2} (\Delta - 1)u  + U  \\
u_0 = v.
\end{cases}
    \end{align*}
    Then, for any $r\in (1,\infty]$, $\beta \in (0,1)$ and $\delta \in (0,1-\beta)$, we have
  \[\|u\|_{L^r([0,T];H^{1+\delta}(\Lambda))\cap C([0,T];H^{1 + \delta - \frac{2}{r}}(\Lambda))\cap C^{\delta/2}([0,T];H^{1-\frac{2}{r}}(\Lambda))} \lesssim \|v\|_{H^{1+\delta}(\Lambda)} + \| U\|_{L^r([0,T];H^{-\beta}(\Lambda))}   .\]  
\end{lemm}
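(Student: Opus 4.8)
The plan is to prove the estimate directly from the Duhamel (mild-solution) representation
\[
u_t = S_t v + \int_0^t S_{t-s} U_s\, ds, \qquad S_t \coloneqq e^{\frac{t}{2}(\Delta - 1)},
\]
treating the homogeneous term $S_t v$ and the inhomogeneous term $w_t \coloneqq \int_0^t S_{t-s}U_s\,ds$ separately. On the torus the semigroup $S_t$ is a Fourier multiplier with symbol $e^{-\frac{t}{2}(1+|l|^2)}$ on the mode $e_l$, so $(1-\Delta)^{\theta/2}S_t$ has symbol $(1+|l|^2)^{\theta/2}e^{-\frac t2(1+|l|^2)}$. From the elementary bounds $\sup_{\lambda\ge 1}\lambda^{\theta/2}e^{-t\lambda/2}\lesssim t^{-\theta/2}$ (for $\theta\ge 0$) and $|e^{-t\lambda/2}-1|\le (t\lambda/2)^{\gamma/2}$ (for $\gamma\in[0,2]$), I obtain the two workhorse estimates
\[
\|S_t f\|_{H^{s+\theta}}\lesssim t^{-\theta/2}\|f\|_{H^s}\ \ (\theta\ge 0),\qquad \|(S_t-\mathrm{Id})f\|_{H^{s-\gamma}}\lesssim t^{\gamma/2}\|f\|_{H^s}\ \ (\gamma\in[0,2]),
\]
which encode the smoothing and the time-regularity of the flow. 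These, together with Young's and Hölder's inequalities in the time variable, are the only tools needed.

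The homogeneous part is immediate: since $1+\delta-2/r\le 1+\delta$ and $S_t$ is uniformly bounded on each $H^s$, one has $\sup_t\|S_t v\|_{H^{1+\delta-2/r}}\lesssim\|v\|_{H^{1+\delta}}$ and $\|S_\cdot v\|_{L^r_T H^{1+\delta}}\lesssim T^{1/r}\|v\|_{H^{1+\delta}}$, while the bound $\|(S_{t'}-S_t)v\|_{H^{1-2/r}}=\|(S_{t'-t}-\mathrm{Id})S_t v\|_{H^{1-2/r}}\lesssim (t'-t)^{\delta/2}\|v\|_{H^{1+\delta}}$ follows from the time-regularity estimate with loss $\gamma=\delta$. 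The heart of the matter is $w$, a genuine parabolic maximal-regularity statement. For the $L^r_T H^{1+\delta}$ norm the smoothing estimate gives $\|w_t\|_{H^{1+\delta}}\le \int_0^t (t-s)^{-\theta/2}\|U_s\|_{H^{-\beta}}\,ds$ with $\theta=1+\delta+\beta$; the hypothesis $\delta<1-\beta$ is exactly what forces $\theta/2<1$, so the kernel $(t-s)^{-\theta/2}$ lies in $L^1([0,T])$ and Young's convolution inequality ($L^1\ast L^r\to L^r$) yields the claimed bound.

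For the continuity-in-time and Hölder-in-time components I would split, for $t<t'$,
\[
w_{t'}-w_t=\int_0^t (S_{t'-t}-\mathrm{Id})S_{t-s}U_s\,ds+\int_t^{t'} S_{t'-s}U_s\,ds=:\mathrm{I}+\mathrm{II},
\]
and estimate each term in $H^{1-2/r}$ by Hölder's inequality in $s$ with conjugate exponent $r'$. In term $\mathrm{II}$ the smoothing estimate produces the kernel $(t'-s)^{-(1-2/r+\beta)/2}$, which is $L^{r'}$-integrable over $[t,t']$ because $\beta<1$, leaving a power $(t'-t)^{(1-\beta)/2}$ that is dominated by $(t'-t)^{\delta/2}$ up to a constant on $[0,T]$ since $\delta\le 1-\beta$; in term $\mathrm{I}$ the time-regularity estimate extracts a factor $(t'-t)^{\delta/2}$ at the cost of $\delta$ derivatives, leaving the kernel $(t-s)^{-(1+\delta+\beta-2/r)/2}$, whose $L^{r'}([0,T])$-integrability again reduces to $\delta+\beta<1$. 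Running the identical decomposition at target regularity $1+\delta-2/r$, now taking a loss $\epsilon<1-\delta-\beta$ in term $\mathrm{I}$, gives modulus-of-continuity bounds $(t'-t)^{\epsilon/2}$ and $(t'-t)^{(1-\delta-\beta)/2}$ that both tend to $0$, establishing membership in $C([0,T];H^{1+\delta-2/r})$.

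The main obstacle is not any single estimate but the bookkeeping forced by parabolic scaling: every gain of $2/r$ derivatives in space must be paid for by $L^r$ (rather than $L^\infty$) integrability in time, and every $\delta/2$ of Hölder regularity in time costs $\delta$ derivatives in space. Tracking these trade-offs while checking that all the resulting singular-kernel exponents are integrable—each condition collapsing to either $\beta<1$ or the sharp constraint $\delta+\beta<1$—is where care is required, in particular at the endpoint $r=\infty$, where $r'=1$ and the Hölder step degenerates into a supremum in $s$. I would finally note that all these bounds are \emph{a priori}; the genuine existence of the mild solution and the upgrade from boundedness to continuity are obtained by first working with smooth data $U$ and $v$, for which $w$ is classical, and then invoking the density of $\mathcal{D}(\Lambda)$-valued data together with the uniform estimates just proved.
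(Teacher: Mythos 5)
Your proof is correct, and it takes essentially the same route as the source the paper relies on: the paper does not prove this lemma itself but cites \cite[Theorem 3.4 and Proposition A.3]{HKK21a}, whose argument is exactly your Duhamel decomposition combined with the semigroup smoothing bound $\|S_t f\|_{H^{s+\theta}}\lesssim t^{-\theta/2}\|f\|_{H^s}$ and the increment bound $\|(S_t-\mathrm{Id})f\|_{H^{s-\gamma}}\lesssim t^{\gamma/2}\|f\|_{H^s}$, carried out there in Besov spaces $B^s_{p,q}$ (here one only needs $H^s=B^s_{2,2}$). Your exponent bookkeeping is sound: each singular kernel is integrable precisely under $\beta<1$ or $\delta+\beta<1$, matching the hypotheses, and your treatment of the endpoint $r=\infty$ and the density argument for qualitative continuity are both appropriate.
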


\subsection{Ornstein-Uhlenbeck process and Wick exponentials}
We consider the infinite dimensional stationary Ornstein-Uhlenbeck process
\begin{equation}\label{eq:defx}
    X_t \coloneqq e^{\frac{t}{2}(\Delta-1)}\xi + \int^t_0 e^{\frac{t-s}{2}(\Delta -1)}dW(s),
\end{equation}
where $\xi$ is distributed according to the massive Gaussian free field $\mu_0$ with covariance $(1-\Delta )$, and independent of the $L^2(\Lambda)$-cylindrical Brownian motion $\{W_t\}_{t\ge0}$. Note that $X^N_t \coloneqq P_N X_t$ solves the linear equation 
\begin{gather}
\begin{cases}
\partial_t X^N_t = \frac{1}{2} (\Delta - 1)X^N_t  + P_N\dot{W}_t \label{eqX} \\
X^N_0 = P_N \xi, 
\end{cases}
\end{gather}
where $P_N$ is defined by \eqref{eq:appro}. We introduce the approximation of the Wick exponential defined by 
\[ \mathcal{D}'(\Lambda) \ni \phi \mapsto \exp_N^{\Diamond}(\alpha\phi) \coloneqq \exp\left(\alpha P_N \phi - \frac{\alpha^2}2 C_N \right) \in \mathcal{D}'(\Lambda)     ,\]
where 
\begin{equation}\label{eq:const} C_N \coloneqq \int_{\mathcal{D}'(\Lambda)} | P_N \phi (x)|^2 \mu_0(d\phi) = \langle (1-\Delta)^{-1}P_N \delta_x,P_N\delta_x\rangle = \frac{1}{(2\pi)^2}\sum_{l\in\mathbb{Z}^2,|l|\le A^N}\frac{1}{1+|l|^2} \simeq N     
\end{equation}
is the renormalization constant. 
The following results on the Ornstein-Uhlenbeck process and the Wick exponentials are known, see \cite[Theorem 2.1, Proposition 3.1 and Theorem 3.2]{HKK23}, for example. In the following lemma, let $B_{p,q}^s(\Lambda)$ ($s\in \mathbb{R}$, $p,q\in [1,\infty]$) denote the Besov spaces. Recall that $B_{2,2}^s(\Lambda)=H^s(\Lambda)$ with equivalent norms.
\begin{lemm}\label{lem:ou-wick}
    \begin{enumerate}
        \item 
        One has $X \in C([0,T];H^{-\epsilon}(\Lambda))$ almost surely for any $\epsilon>0$.
        \item 
        Let $\alpha^2 < 8\pi$, $p> 1$ and $s\in(\frac{\alpha^2}{4\pi}(p-1),1)$. Then, $\exp_N^{\Diamond}(\alpha \phi)$ converges to some $\exp^{\Diamond}(\alpha \phi)$ in $B_{p,p}^{-s}(\Lambda)$ for $\mu_0$-almost every $\phi$.
        \item 
      Let $\alpha^2 < 8\pi$, $p> 1$ and $s\in(\frac{\alpha^2}{4\pi}(p-1),1)$. Then,  $\exp_N^{\Diamond}(\alpha X_t)$ converges to some $\exp^{\Diamond}(\alpha X_t)$ in $L^p([0,T];B_{p,p}^{-s}(\Lambda))$ almost surely.
        \end{enumerate}
\end{lemm}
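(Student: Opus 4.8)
\emph{Part (1).} The plan is to diagonalise in Fourier modes and apply Kolmogorov's criterion. Writing $X_t=\sum_{l\in\mathbb Z^2}\hat X_t(l)e_l$, each coefficient is a stationary one-dimensional Ornstein--Uhlenbeck process solving $d\hat X_t(l)=-\tfrac12(1+|l|^2)\hat X_t(l)\,dt+d\hat W_t(l)$, with stationary variance $(1+|l|^2)^{-1}$ and increment variance
\begin{equation*}
\mathbb E\,|\hat X_t(l)-\hat X_\sigma(l)|^2=\frac{2}{1+|l|^2}\Big(1-e^{-\frac12(1+|l|^2)|t-\sigma|}\Big).
\end{equation*}
Using $1-e^{-a}\le a^\theta$ for $a\ge 0$, $\theta\in[0,1]$, this is $\lesssim(1+|l|^2)^{\theta-1}|t-\sigma|^\theta$, so that
\begin{equation*}
\mathbb E\,\|X_t-X_\sigma\|_{H^{-\epsilon}}^2=\sum_{l\in\mathbb Z^2}(1+|l|^2)^{-\epsilon}\,\mathbb E\,|\hat X_t(l)-\hat X_\sigma(l)|^2\lesssim|t-\sigma|^\theta\sum_{l\in\mathbb Z^2}(1+|l|^2)^{\theta-1-\epsilon},
\end{equation*}
and the last sum converges whenever $\theta<\epsilon$. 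Since $X_t-X_\sigma$ is a centred Gaussian element of $H^{-\epsilon}(\Lambda)$, the equivalence of Gaussian moments upgrades this to $\mathbb E\,\|X_t-X_\sigma\|_{H^{-\epsilon}}^{2n}\lesssim_n|t-\sigma|^{n\theta}$, and Kolmogorov's continuity criterion with $n$ large produces a modification in $C([0,T];H^{-\epsilon}(\Lambda))$. As $\epsilon>0$ is arbitrary, this gives (1).

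\emph{Parts (2)--(3).} For the Wick exponentials I would use the heat-semigroup characterisation of the Besov norm: with $Q_t\coloneqq e^{t\Delta}$ one has $\|f\|_{B^{-s}_{p,p}}^p\simeq\int_0^1 t^{sp/2}\|Q_tf\|_{L^p(\Lambda)}^p\,\frac{dt}{t}$ up to a lower-order term controlling the low frequencies. Since $Q_t$ is a positive mollification and the approximations are positive, this reduces everything to the pointwise moments $\mathbb E\,(Q_t\exp^{\Diamond}(\alpha\phi)(x))^p$ without absolute values. The fundamental input is the Gaussian correlation identity, first for the approximations (where $P_N\phi$ has covariance $C_N(\cdot,\cdot)$ with diagonal $C_N$ as in \eqref{eq:const}) and then in the limit,
\begin{equation*}
\mathbb E\Big[\prod_{i=1}^{p}\exp^{\Diamond}(\alpha\phi)(y_i)\Big]=\exp\Big(\alpha^2\!\!\sum_{1\le i<k\le p}\!\!G(y_i-y_k)\Big),\qquad G\coloneqq(1-\Delta)^{-1}.
\end{equation*}
Because $G(y)=-\tfrac1{2\pi}\log|y|+O(1)$ near the origin, the rescaling $y_i=x+\sqrt t\,z_i$ extracts the exact scaling factor and leaves a $z$-integral against Gaussian weights and the kernel $\prod_{i<k}|z_i-z_k|^{-\alpha^2/2\pi}$, giving $\mathbb E\,(Q_t\exp^{\Diamond}(\alpha\phi)(x))^p\lesssim t^{-\frac{\alpha^2}{8\pi}p(p-1)}$ uniformly in $x$. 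Feeding this into the Besov characterisation, $\int_0^1 t^{sp/2-\frac{\alpha^2}{8\pi}p(p-1)}\frac{dt}{t}<\infty$ precisely when $s>\frac{\alpha^2}{4\pi}(p-1)$, which is exactly the stated threshold.

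The main obstacle is the finiteness of the rescaled $z$-integral, i.e.\ the multifractal moment estimate for Gaussian multiplicative chaos: the diagonal singularities $|z_i-z_k|^{-\alpha^2/2\pi}$ are non-integrable, and the binding constraint comes from the simultaneous collision of all $p$ points, which is integrable exactly when $p<p_c\coloneqq 8\pi/\alpha^2$. Thus the displayed moment bound, hence the $L^p(\Omega)$-estimate, holds directly only for $p<p_c$; this already covers the $L^2$-regime $\alpha^2<4\pi$, in which non-vacuousness of the hypothesis (the $s$-interval being non-empty forces $p<1+\tfrac{4\pi}{\alpha^2}$) keeps $p$ below $p_c$. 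For $\alpha^2\in(4\pi,8\pi)$ and $p\ge p_c$ the pointwise moments are infinite, and I would instead run the argument at an auxiliary exponent $\tilde p\in(1,p_c)$ close to $1$ with a suitable $\tilde s$, so that the Sobolev-type Besov embedding $B^{-\tilde s}_{\tilde p,\tilde p}(\Lambda)\hookrightarrow B^{-s}_{p,p}(\Lambda)$ applies; the conditions $\tilde p\le p$, $p\ge p_c$ and $s>\frac{\alpha^2}{4\pi}(p-1)$ guarantee that the scaling indices match, transferring the conclusion to the target space without ever using the $p$-th moment. To pass from $L^{\tilde p}(\Omega)$-convergence to almost sure convergence of the whole sequence, I would prove the same estimate for the increments $\exp^{\Diamond}_{N+1}(\alpha\phi)-\exp^{\Diamond}_N(\alpha\phi)$, whose covariance differs from the limiting one only on the Fourier shell $A^N<|l|\le A^{N+1}$, obtaining geometric decay in $N$ and concluding by Markov's inequality and Borel--Cantelli. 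Finally, part (3) follows from part (2) by stationarity: $X_t$ is distributed as $\mu_0$ for every $t$, so the pointwise moment bounds are uniform in $t$; integrating in time and repeating the Borel--Cantelli argument yields almost sure convergence in $L^p([0,T];B^{-s}_{p,p}(\Lambda))$.
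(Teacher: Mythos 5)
Your part (1) is correct and complete up to routine details: the mode-wise increment bound $\mathbb{E}|\hat X_t(l)-\hat X_\sigma(l)|^2\lesssim(1+|l|^2)^{\theta-1}|t-\sigma|^\theta$, the summability condition $\theta<\epsilon$, Gaussian moment equivalence and Kolmogorov give exactly the claim. Note that the paper itself does not prove this lemma at all but quotes it from \cite[Theorem 2.1, Proposition 3.1 and Theorem 3.2]{HKK23}; the proofs there (and the paper's own Section \ref{sec:stoch}, which reproves a variant) go through the Wiener-chaos expansion of the Wick exponential, the covariance estimate \eqref{est:HKK} and hypercontractivity, whereas you take the Gaussian-multiplicative-chaos route via the thermic characterisation of $B^{-s}_{p,p}(\Lambda)$. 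Your exponent bookkeeping is all correct: with $G\sim-\frac1{2\pi}\log|\cdot|$ the critical moment is $p_c=8\pi/\alpha^2$, the smoothed moment bound $t^{-\frac{\alpha^2}{8\pi}p(p-1)}$ matches the GMC multifractal exponent, integrability against $t^{sp/2}\frac{dt}{t}$ gives precisely $s>\frac{\alpha^2}{4\pi}(p-1)$, and your observation that $p\ge p_c$ together with $s>\frac{\alpha^2}{4\pi}(p-1)$ forces $s>2\frac{p-1}{p}$, which is exactly what the embedding $B^{-\tilde s}_{\tilde p,\tilde p}(\Lambda)\hookrightarrow B^{-s}_{p,p}(\Lambda)$ costs as $\tilde p\downarrow1$ (the shift being $\tilde s+2(\frac1{\tilde p}-\frac1p)$ in dimension $2$), is a genuinely nice reduction.

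There is, however, a real gap at the convergence step. Your moment bound exploits positivity twice: dropping the absolute values inside $\|Q_t\cdot\|_{L^p}^p$, and evaluating $\mathbb{E}\prod_i\exp^{\Diamond}(\alpha\phi)(y_i)$ by the correlation identity, which as written only makes sense for integer $p$; for fractional $p<p_c$ one must invoke Kahane's convexity inequality or the known GMC moment bounds, which your appeal to ``the multifractal moment estimate'' arguably covers as a citation. But the increments $\exp^{\Diamond}_{N+1}(\alpha\phi)-\exp^{\Diamond}_N(\alpha\phi)$ are \emph{signed}, so neither device applies to them, and ``prove the same estimate for the increments'' is not available verbatim. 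In the regime $4\pi\le\alpha^2<8\pi$ you must work at $\tilde p\in(1,p_c)$ with $p_c\le2$, and the fallback $\mathbb{E}|Z|^{\tilde p}\le(\mathbb{E}Z^2)^{\tilde p/2}$ is also barred, since the relevant second moments are infinite there — this is the entire reason for leaving the $L^2$ regime. Closing this requires an extra idea, e.g.\ factoring $\exp^{\Diamond}_{N+1}(\alpha\phi)=\exp^{\Diamond}_N(\alpha\phi)\,e^{\alpha\delta_N\phi-\frac{\alpha^2}2(C_{N+1}-C_N)}$ with $\delta_N\phi\coloneqq(P_{N+1}-P_N)\phi$ independent of the low modes, applying conditional Jensen to reduce to a conditional second moment, and only then taking fractional moments of the resulting \emph{positive} quantity; this (or the chaos/hypercontractivity machinery in the restricted range $\alpha^2(p-1)<4\pi$, as in the paper's Lemma \ref{lem:stoch}) is the technical core of the cited works. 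Also, your phrase ``whose covariance differs \dots only on the Fourier shell'' conflates the Gaussian field with the non-Gaussian increment. As written, your argument establishes uniform-in-$N$ moment bounds, hence tightness and subsequential limits, but not the asserted almost sure convergence of the sequence; once the increment estimate is supplied, your Borel--Cantelli step and the stationarity argument for part (3) are fine.
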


\section{Global well-posedness in the $L^2$-regime}\label{sec:gwp}
In this section, we prove Theorem \ref{thm1}.
We decompose the solution of the approximation equation \eqref{eq1'} by $\Phi^N = X^N + Y^N$, where $X^N$ solves the equation \eqref{eqX} and $Y^N$ solves 
\begin{gather}
\begin{cases}
\partial_t Y^N_t = \frac{1}{2} (\Delta - 1)Y^N_t - \frac{1}{2}\int_{[-\alpha_0,\alpha_0]}\alpha\exp{(\alpha (X^N_t + Y^N_t) - \frac{\alpha^2}2C_N)}\nu(d\alpha)  \label{eqY} \\
Y^N_0 = \eta. 
\end{cases}
\end{gather}
We define $\mathcal{X}_t^{\alpha,N}\coloneqq \exp_N^\Diamond(\alpha X_t^N)\coloneqq \exp{( \alpha X_t^N -\frac{\alpha^2}{2}C_N)}$.
Then, we have the following result, see Section \ref{sec:stoch} for the proof.
\begin{lemm}\label{lem:weightedexp}
Let $\alpha_0^2<4\pi$. For any $\beta >\frac{\alpha_0^2}{4\pi}$, $\left\{\mathcal{X}^{\alpha,N} \right\}_N$ converges to some $\mathcal{X}^\alpha$ in $L^2([0,T]\times [-\alpha_0, \alpha_0], \lambda_T \otimes \nu;H^{-\beta}(\Lambda))$ almost surely as $N\rightarrow \infty$, where $\lambda_T$ is the Lebesgue measure on $[0,T]$. 
\end{lemm}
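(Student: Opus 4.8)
The plan is to show that $\{\mathcal{X}^{\alpha,N}\}_N$ is almost surely Cauchy in the Banach space $\mathcal{B}\coloneqq L^2_{T,\alpha_0}H^{-\beta}$; since $\mathcal{B}$ is complete this yields a.s.\ convergence to a limit $\mathcal{X}=\{\mathcal{X}^\alpha\}$, which by uniqueness of limits coincides $\nu$-a.e.\ with the per-$\alpha$ limit supplied by Lemma~\ref{lem:ou-wick}(iii) (used with $p=2$, $s=\beta$; the hypothesis $s\in(\tfrac{\alpha^2}{4\pi},1)$ holds \emph{uniformly} in $\alpha\in[-\alpha_0,\alpha_0]$ because $\beta>\tfrac{\alpha_0^2}{4\pi}\ge\tfrac{\alpha^2}{4\pi}$). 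To get the Cauchy property I would prove a summable second-moment bound
\[
\mathbb{E}\big[\|\mathcal{X}^{\cdot,N+1}-\mathcal{X}^{\cdot,N}\|_{\mathcal{B}}^2\big]\lesssim A^{-\gamma N}
\]
for some $\gamma>0$. Then $\mathbb{E}\|\mathcal{X}^{\cdot,N+1}-\mathcal{X}^{\cdot,N}\|_{\mathcal{B}}\lesssim A^{-\gamma N/2}$ by Cauchy--Schwarz, and Tonelli gives $\sum_N\|\mathcal{X}^{\cdot,N+1}-\mathcal{X}^{\cdot,N}\|_{\mathcal{B}}<\infty$ almost surely, which is precisely the a.s.\ Cauchy property. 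Note that a geometric (indeed any summable) rate suffices.

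For the moment bound, I would first use Tonelli together with the fact that $X_t$ is stationary in $t$ with one-time law $\mu_0$ to reduce to a computation free of $t$ and of $\Omega$. Writing $K_\beta\ge0$ for the kernel of $(1-\Delta)^{-\beta}$ and $q_N(x-y)\coloneqq\mathbb{E}[X_t^N(x)X_t^N(y)]=\sum_{|l|\le A^N}(2\pi)^{-2}(1+|l|^2)^{-1}e^{il\cdot(x-y)}$ for the regularized covariance, the representation $\langle f,g\rangle_{H^{-\beta}}=\iint K_\beta(x-y)f(x)g(y)\,dx\,dy$ and the Gaussian identity $\mathbb{E}[\mathcal{X}_t^{\alpha,N}(x)\mathcal{X}_t^{\alpha,M}(y)]=\exp(\alpha^2 q_{\min(N,M)}(x-y))$ --- where the collapse to the coarser scale uses $P_NP_M=P_{\min(N,M)}$ --- give, after the substitution $z=x-y$,
\[
\mathbb{E}\big[\|\mathcal{X}_t^{\alpha,N+1}-\mathcal{X}_t^{\alpha,N}\|_{H^{-\beta}}^2\big]=|\Lambda|\int_\Lambda K_\beta(z)\big(e^{\alpha^2 q_{N+1}(z)}-e^{\alpha^2 q_N(z)}\big)\,dz.
\]
Integrating in $t$ over $[0,T]$ and in $\alpha$ against the finite measure $\nu$ then reduces the whole estimate to bounding this single deterministic integral uniformly in $\alpha\in[-\alpha_0,\alpha_0]$.

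To extract the rate I would write the integrand as $e^{\alpha^2 q_N(z)}(e^{\alpha^2\rho_N(z)}-1)$ with $\rho_N\coloneqq q_{N+1}-q_N$, and use three deterministic facts: $\|\rho_N\|_{C(\Lambda)}\le C_{N+1}-C_N\simeq\log A$ (so $|e^{\alpha^2\rho_N}-1|\lesssim|\rho_N|$ uniformly in $N$ and $\alpha$ by $|e^w-1|\le|w|e^{|w|}$), the singularity bound $K_\beta(z)\lesssim|z|^{2\beta-2}$, and the covariance bound $e^{\alpha^2 q_N(z)}\lesssim(|z|\vee A^{-N})^{-\alpha^2/(2\pi)}$ coming from $q_N(z)\lesssim 1+\log_+\!\big(1/(|z|\vee A^{-N})\big)$. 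Splitting $\Lambda$ into $\{|z|\le A^{-N}\}$ and $\{|z|>A^{-N}\}$: on the near-diagonal region I bound $|\rho_N|$ by its sup and integrate $|z|^{2\beta-2}$ against $A^{N\alpha^2/(2\pi)}$, obtaining $\lesssim A^{-N(2\beta-\alpha^2/(2\pi))}$; on the off-diagonal region I exploit the oscillatory cancellation of the frequency shell to gain a decay $|\rho_N(z)|\lesssim (A^N|z|)^{-\kappa}$ for a suitable $\kappa>0$, paired against the locally integrable singularity $|z|^{2\beta-2-\alpha^2/(2\pi)}$. In both regions the decisive exponent is $2\beta-\tfrac{\alpha^2}{2\pi}$, which is strictly positive and bounded below uniformly in $\alpha$ exactly because $\beta>\tfrac{\alpha_0^2}{4\pi}$, giving the geometric bound with $\gamma=\gamma(\beta,\alpha_0)>0$.

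The main obstacle I anticipate is the off-diagonal estimate on $\rho_N$: quantifying the cancellation in the shell $A^N<|l|\le A^{N+1}$ so as to produce a genuine decay rate $(A^N|z|)^{-\kappa}$ (via summation by parts or comparison with the corresponding integral), while keeping every constant uniform in $\alpha\in[-\alpha_0,\alpha_0]$ so that the final integration against the arbitrary finite measure $\nu$ goes through. This is intertwined with controlling the product of the two competing $|z|$-singularities near the diagonal, whose borderline integrability is governed precisely by the $L^2$-regime condition $\alpha_0^2<4\pi$ together with $\beta>\tfrac{\alpha_0^2}{4\pi}$.
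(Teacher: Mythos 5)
Your proposal is correct, but it takes a genuinely different route from the paper's. The paper never proves Lemma \ref{lem:weightedexp} directly: it declares it the ``similar and simpler'' case of Lemma \ref{lem:stoch}, whose proof expands the Wick exponential into Wick powers $\sum_{n}\frac{\alpha^n}{n!}\Wick{(P_N\phi)^n}$, controls each chaos component by the imported bound \eqref{est:HKK} from \cite[Theorem 2.2]{HKK21a} together with hypercontractivity (needed there because the exponent $p(\alpha)=p\alpha_0/|\alpha|$ is unbounded as $\alpha\to 0$), sums the series via Lemma \ref{lem:sum}, and concludes by Chebyshev and Borel--Cantelli. You instead exploit the fact that the present lemma is a pure second-moment ($p=2$) statement: the Gaussian identity $\mathbb{E}[\mathcal{X}_t^{\alpha,N}(x)\mathcal{X}_t^{\alpha,M}(y)]=\exp(\alpha^2 q_{\min(N,M)}(x-y))$ resums the entire chaos series into the exponential of the truncated covariance, reducing the whole estimate to one deterministic kernel integral; your Tonelli/summability step is then an equivalent substitute for the paper's Chebyshev--Borel--Cantelli step. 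What each approach buys: yours is self-contained (no hypercontractivity, no external Wick-power estimate) and makes the hypotheses transparent --- the exponent $2\beta-\alpha^2/(2\pi)>0$, i.e.\ $\beta>\alpha_0^2/(4\pi)$, is exactly what renders $K_\beta(z)\,e^{\alpha^2 q_N(z)}$ integrable --- but it is rigidly tied to $p=2$ and could not yield Lemma \ref{lem:stoch}, which is why the paper sets up the chaos machinery once and reuses it for both statements. The step you flag as the main obstacle is real but surmountable by exactly the means you name: writing $S_R(z)=\sum_{|l|\le R}e^{il\cdot z}$ and summing one coordinate first gives the elementary bound $|S_R(z)|\lesssim R/|z|$ from the one-dimensional Dirichlet kernel, and Abel summation in $m=|l|^2$ (with weights $(1+m)^{-1}$, whose increments are $O(m^{-2})$) then yields both $|\rho_N(z)|\lesssim (A^N|z|)^{-1}$ (so $\kappa=1$) and the bound $q_N(z)\le\frac{1}{2\pi}\log\bigl(1/(|z|\vee A^{-N})\bigr)+O(1)$ that you also invoke; with these, your near/off-diagonal splitting closes with $\gamma=\min\bigl(\kappa,\,2\beta-\alpha_0^2/(2\pi)\bigr)$ up to an arbitrarily small loss, uniformly in $\alpha\in[-\alpha_0,\alpha_0]$, and the finiteness of $\nu$ finishes the argument.
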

In view of Lemma \ref{lem:weightedexp}, by formally taking the limit $N\rightarrow \infty$ in the equation~\eqref{eqY}, we consider the equation
\begin{gather}
\begin{cases}
\partial_t Y_t = \frac{1}{2} (\Delta - 1)Y_t - \frac{1}{2}\int_{[-\alpha_0,\alpha_0]}\alpha e^{\alpha  Y_t}\mathcal{X}_t^\alpha \nu(d\alpha)  \label{eqY2} \\
Y_0 = \eta. 
\end{cases}
\end{gather}
We prove the unique existence of the mild solution to the equation~\eqref{eqY2} in the solution space 
\[ \mathscr{Y}_T \coloneqq \left\{ \mathcal{Y} \in L^{1}([0,T]; H^1(\Lambda)\cap C(\Lambda))\cap C([0,T];H^{-\beta}(\Lambda))\ ;\ e^{\alpha_0|\mathcal{Y}|}\in L^2([0,T];C(\Lambda) )   \right\}   \]
for any $\beta \in (\frac{\alpha_0^2}{4\pi},1)$ and $T>0$.
We use a standard compactness argument. More precisely, we first derive estimates on $Y^N$ (and $e^{\alpha Y^N}$) that are uniform in $N\in \mathbb{N}$ on suitable function spaces. Then, combining them with compact embedding theorems and the uniqueness of the solution to the limit equation, we obtain the desired result, where we also need to use some stochastic estimates. The proofs of the stochastic estimates will be postponed to Section~\ref{sec:stoch}.

We first prove the following uniqueness result, which is suitable for our problem in view of Lemma \ref{lem:weightedexp}.
\begin{lemm}\label{lem:unique}
There is at most one mild solution $\mathcal{Y} \in \mathscr{Y}_T$ of the (deterministic) equation 
\begin{gather}
\begin{cases}
\partial_t \mathcal{Y}_t = \frac{1}{2} (\Delta - 1)\mathcal{Y}_t - \frac{1}{2}\int_{[-\alpha_0,\alpha_0]}\alpha e^{\alpha  \mathcal{Y}_t}\mathcal{Z}_t^\alpha \nu(d\alpha)  \label{eqY3} \\
\mathcal{Y}_0 = \eta 
\end{cases}
\end{gather}
for any given $\eta \in H^{2-\beta}(\Lambda)$ and $\mathcal{Z} \in L^2([0,T]\times [-\alpha_0, \alpha_0], \lambda_T \otimes \nu;H_+^{-\beta}(\Lambda))$.
\end{lemm}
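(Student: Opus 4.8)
The plan is to run an $L^2(\Lambda)$-energy estimate on the difference of two solutions, the whole point being that the nonlinearity is monotone even though $\nu$ may charge both signs of $\alpha$. Suppose $\mathcal{Y}^{(1)},\mathcal{Y}^{(2)}\in\mathscr{Y}_T$ are two mild solutions of \eqref{eqY3} with the same $\eta$ and the same $\mathcal{Z}$, and set $w\coloneqq\mathcal{Y}^{(1)}-\mathcal{Y}^{(2)}$. Subtracting the two Duhamel formulas, the initial data cancels and $w$ is the mild solution of $\partial_t w=\frac12(\Delta-1)w-\frac12 R$ with $w_0=0$, where $R_s\coloneqq\int_{[-\alpha_0,\alpha_0]}\alpha\bigl(e^{\alpha\mathcal{Y}^{(1)}_s}-e^{\alpha\mathcal{Y}^{(2)}_s}\bigr)\mathcal{Z}^\alpha_s\,\nu(d\alpha)$. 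First I would record the bookkeeping: since $\mathcal{Y}^{(i)}_s\in C(\Lambda)$ for a.e.\ $s$ and $\mathcal{Z}^\alpha_s\in H^{-\beta}_+(\Lambda)$, each product is a well-defined distribution by Lemma \ref{lem:prod}, with $\|R_s\|_{H^{-\beta}}\lesssim\int|\alpha|\,\|e^{\alpha\mathcal{Y}^{(1)}_s}-e^{\alpha\mathcal{Y}^{(2)}_s}\|_{C}\,\|\mathcal{Z}^\alpha_s\|_{H^{-\beta}}\,\nu(d\alpha)$. Bounding the exponential factor by $e^{\alpha_0|\mathcal{Y}^{(1)}_s|}+e^{\alpha_0|\mathcal{Y}^{(2)}_s|}$ and applying Cauchy--Schwarz in $(s,\alpha)$ against the two hypotheses $e^{\alpha_0|\mathcal{Y}^{(i)}|}\in L^2_TC$ and $\mathcal{Z}\in L^2_{T,\alpha_0}H^{-\beta}$ yields $R\in L^1_TH^{-\beta}$; from the Duhamel formula and the smoothing bound $\|e^{\tau(\Delta-1)/2}\|_{H^{-\beta}\to L^2}\lesssim\tau^{-\beta/2}$ one then gets $w_t\in L^2(\Lambda)$ for a.e.\ $t$ with $\|w_t\|_{L^2}\to0$ as $t\downarrow0$.

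The heart of the argument is the sign of the nonlinear term. For every $\alpha$ the map $y\mapsto\alpha e^{\alpha y}$ is nondecreasing, since its derivative is $\alpha^2 e^{\alpha y}\ge0$, so $\alpha(e^{\alpha a}-e^{\alpha b})(a-b)\ge0$ for all $a,b\in\mathbb{R}$ \emph{irrespective of the sign of} $\alpha$. Consequently the continuous function $\alpha(e^{\alpha\mathcal{Y}^{(1)}_s}-e^{\alpha\mathcal{Y}^{(2)}_s})\,w_s$ is pointwise nonnegative, and pairing it against the nonnegative measure $\mathcal{Z}^\alpha_s$ gives $\langle R_s,w_s\rangle=\int\langle\mathcal{Z}^\alpha_s,\,\alpha(e^{\alpha\mathcal{Y}^{(1)}_s}-e^{\alpha\mathcal{Y}^{(2)}_s})w_s\rangle\,\nu(d\alpha)\ge0$. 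This is exactly the structural input that replaces the one-signed drift available in the unweighted theory. Granting momentarily a rigorous energy identity, $\frac12\frac{d}{dt}\|w_t\|_{L^2}^2=-\frac12\|w_t\|_{H^1}^2-\frac12\langle R_t,w_t\rangle\le0$, so $w_0=0$ forces $w\equiv0$.

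Making the energy identity rigorous is the main obstacle, because membership in $\mathscr{Y}_T$ only provides $w\in L^1_TH^1$, which is one power of integrability short of the $L^2_TH^1$ regularity needed to differentiate $\|w_t\|_{L^2}^2$ by the Lions--Magenes lemma; this shortfall is intrinsic to the $L^2$-regime, where $\|R\|_{H^{-\beta}}$ is a product of two genuinely $L^2$-in-time quantities and hence only $L^1$ in time. I would get around this by approximation: test a smoothed version of the equation (Fourier truncation $P_M$, or a positivity-preserving spatial mollification of the measures $\mathcal{Z}^\alpha$) with the corresponding smoothed difference, for which the classical identity $\|w_t^{(M)}\|_{L^2}^2=-\int_0^t\|w^{(M)}_s\|_{H^1}^2\,ds-\int_0^t\langle R_s,w^{(M)}_s\rangle\,ds$ holds. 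The favorable dissipation term is simply \emph{discarded} (it is $\le0$), leaving $\|w^{(M)}_t\|_{L^2}^2\le-\int_0^t\langle R_s,w^{(M)}_s\rangle\,ds$, so that I never need the missing $L^2_TH^1$ bound; passing to the limit I only have to control the cross term and identify its limit as the nonnegative quantity $\int_0^t\langle R_s,w_s\rangle\,ds\ge0$. The delicate point, where the borderline integrability bites, is precisely this passage to the limit in the cross term: a crude dominating-function estimate fails, so one must exploit the pointwise monotone structure recovered in the limit, together with the positivity of the approximation, rather than appealing to uniform integrability. Once $\|w_t\|_{L^2}=0$ for a.e.\ $t$ is established, continuity of $w$ in $H^{-\beta}$ upgrades this to $w\equiv0$ on $[0,T]$, which is the claimed uniqueness.
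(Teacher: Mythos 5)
Your key structural observation is exactly the one the paper uses: for either sign of $\alpha$ the map $y\mapsto\alpha e^{\alpha y}$ is nondecreasing, so $\alpha\bigl(e^{\alpha a}-e^{\alpha b}\bigr)(a-b)\ge 0$, and pairing against the nonnegative distribution $\mathcal{Z}^\alpha$ gives the drift difference a sign. The gap is where you yourself flag it, and it is not closable along the route you sketch. Testing with $w$ produces the cross term $\int_0^t\langle R_s,w_s\rangle\,ds$, whose only available bound through Lemma \ref{lem:prod} is
\begin{equation*}
|\langle R_s,w_s\rangle|\ \lesssim\ \|w_s\|_{C(\Lambda)}\int_{[-\alpha_0,\alpha_0]}|\alpha|\left(e^{\alpha_0\|\mathcal{Y}^{(1)}_s\|_{C(\Lambda)}}+e^{\alpha_0\|\mathcal{Y}^{(2)}_s\|_{C(\Lambda)}}\right)\|\mathcal{Z}^\alpha_s\|_{H^{-\beta}(\Lambda)}\,\nu(d\alpha),
\end{equation*}
a product of an $L^1_T$ factor ($\mathscr{Y}_T$ only gives $\|w_\cdot\|_{C(\Lambda)}\in L^1_T$), an $L^2_T$ factor and an $L^2_{T,\alpha_0}$ factor: the H\"older exponents sum to $2$, so the cross term is not even known to be absolutely convergent. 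Your repair by approximation then fails at exactly the step you leave vague: Fourier truncation $P_M$ does not preserve pointwise positivity, and a positivity-preserving mollification $\rho_M\ast{}$ turns the pairing into $\langle \mathcal{Z}^\alpha_s,\,f_s\,(\rho_M\ast\rho_M\ast w_s)\rangle$ with $f_s=\alpha\bigl(e^{\alpha\mathcal{Y}^{(1)}_s}-e^{\alpha\mathcal{Y}^{(2)}_s}\bigr)$; since $\rho_M\ast\rho_M\ast w_s\neq w_s$ pointwise, $f_s\,(\rho_M\ast\rho_M\ast w_s)$ is no longer pointwise nonnegative, so the sign is destroyed precisely where you need it, while (as you concede) there is no integrable dominating function with which to recover the limit. ``Exploit the pointwise monotone structure recovered in the limit'' is a restatement of the difficulty, not an argument.

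The missing idea --- and the paper's actual proof --- is to abandon the quadratic energy and work with the sublinear functional $\int_\Lambda Z_t\arctan Z_t\,dx$, i.e.\ to test the difference equation with the \emph{bounded}, odd, increasing function $\varphi(Z_s)=\arctan Z_s+\frac{Z_s}{1+|Z_s|^2}$. Boundedness removes the ruinous factor $\|w_s\|_{C(\Lambda)}$: by Lemma \ref{lem:prod} the pairing of $D_s$ with $\varphi(Z_s)$ is bounded by $\|\varphi\|_\infty$ times a product of only two $L^2$-in-time quantities, hence absolutely convergent, and the same bound serves as the dominating function needed to justify the computation by approximation (this is the content of the reference to \cite[Lemma 3.8]{HKK21a}). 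Monotonicity survives because $\varphi$ has the same sign as $Z_s$: writing $e^x-e^y=e^{A(x,y)}(x-y)$ with $A(x,y)=\log\frac{e^x-e^y}{x-y}$, the cross term becomes
\begin{equation*}
-\frac12\int_0^t\!\!\int_\Lambda\int\alpha^2 e^{A(\alpha\mathcal{Y}^{(1)}_s,\alpha\mathcal{Y}^{(2)}_s)}\left(Z_s\arctan Z_s+\frac{|Z_s|^2}{1+|Z_s|^2}\right)\mathcal{Z}^\alpha_s\,\nu(d\alpha)\,dx\,ds\ \le\ 0,
\end{equation*}
and the spatial dissipation terms generated by $\varphi'$ are likewise nonpositive and simply discarded. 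One concludes $\int_\Lambda Z_t\arctan Z_t\,dx\le 0$, hence $Z\equiv 0$. So your diagnosis of the obstruction and your monotonicity insight are both correct, but the proposal does not constitute a proof; the bounded nonlinear test function is the device that makes the argument rigorous within the regularity that $\mathscr{Y}_T$ actually provides.
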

\begin{proof}
Let $\mathcal{Y}$ and $\mathcal{Y}'$ be two solutions of \eqref{eqY3} with the same initial condition $\eta$. Then, $Z \coloneqq \mathcal{Y}-\mathcal{Y}'$ solves the equation 
\begin{equation}\label{eq:differ}
\left\{ \partial_t - \frac{1}{2}(\Delta - 1)  \right\}Z_t = -\frac{1}{2}\int \alpha \left(e^{\alpha \mathcal{Y}_t } - e^{\alpha \mathcal{Y}'_t}  \right) \mathcal{Z}_t^\alpha \nu(d\alpha) \eqqcolon D_t    
\end{equation}
with $Z_0=0$.
Since $e^{\alpha_0 {|\mathcal{Y}|}}, e^{\alpha_0 |\mathcal{Y}'|} \in L^2([0,T]; C(\Lambda))$ and $\mathcal{Z}^\alpha \in L^2([0,T]\times [-\alpha_0,\alpha_0],\lambda_T \otimes \nu;H_+^{-\beta}(\Lambda))$, we have $D \in L^1([0,T]; H^{-\beta}(\Lambda))$ by Lemma \ref{lem:prod}. From the equation \eqref{eq:differ} and $Z_0=0$, we can justify the following calculation by the same way as in \cite[Lemma 3.8]{HKK21a}. Noting that $Z \in \mathscr{Y}_T$,
\begin{align*}
    \int_{\Lambda} Z_t(x) \arctan{Z_t(x)} dx &= \int^t_0 \int_\Lambda \arctan{Z_s(x)} \partial_s   Z_s(x) dxds + \int^t_0 \int_\Lambda Z_s(x)\partial_s \arctan{Z_s(x)}  dxds\\
   &= \frac{1}{2} \int^t_0 \int_\Lambda \left(\arctan{Z_s(x)} + \frac{Z_s(x)}{1 + |Z_s(x)|^2} \right) (\Delta-1)Z_s(x) dxds \\
   &\ \ \ + \int^t_0 \int_\Lambda \left(\arctan{Z_s(x)} + \frac{Z_s(x)}{1 + |Z_s(x)|^2} \right) D_s(x) dxds \\
   &= -\int^t_0 \int_\Lambda \frac{|\nabla Z_s(x)|^2}{1 + |Z_s(x)|^2}dxds - \int^t_0 \int_\Lambda \frac{|Z_s(x)|^2  |\nabla Z_s(x)|^2}{(1 + |Z_s(x)|^2)^2}dxds \\
  &\ \ \  -\frac{1}{2}\int^t_0 \int_\Lambda \left(Z_s(x)\arctan{Z_s(x)} + \frac{|Z_s(x)|^2}{1+ |Z_s(x)|^2}   \right) dxds \\
   &\ \ \ + \int_0^t \int_\Lambda \left( \arctan{Z_s(x)} + \frac{Z_s(x)}{1 + |Z_s(x)|^2}  \right) D_s(x) dxds \\
   &\le \int_0^t \int_\Lambda \left( \arctan{Z_s(x)} + \frac{Z_s(x)}{1 + |Z_s(x)|^2}  \right) D_s(x) dxds.
   \end{align*}
   Moreover, we have
\begin{align*}
    &\int_0^t \int_\Lambda \left( \arctan{Z_s(x)} + \frac{Z_s(x)}{1 + |Z_s(x)|^2}  \right) D_s(x) dxds \\
    &= -\frac{1}{2}\int_\Lambda \left[ \int \alpha \left( e^{\alpha \mathcal{Y}_s}- e^{\alpha \mathcal{Y}'_s}  \right)\left( \arctan{Z_s(x)} + \frac{Z_s(x)}{1 + |Z_s(x)|^2}  \right)\mathcal{Z}_s^\alpha \nu(d\alpha)  \right]    dx \\
    &= -\frac{1}2 \int_\Lambda \left[  \int \alpha^2 e^{A(\alpha \mathcal{Y}_s(x), \alpha \mathcal{Y}'_s(x))}\left( Z_s(x)\arctan{Z_s(x)} + \frac{|Z_s(x)|^2}{1 + |Z_s(x)|^2}  \right)\mathcal{Z}_s^\alpha \nu(d\alpha)\right] dx \\
    &\le 0,
\end{align*}
where $A(x,y)$ is a continuous function on $\mathbb{R}^2$ defined by
\begin{align*}
A(x,y)\coloneqq
    \begin{cases}
        \log \frac{e^x - e^y}{x-y}\ \ \ \mbox{if}\ x\neq y \\
        x\ \ \ \ \ \ \ \ \ \ \ \ \ \mbox{if}\ x=y.
    \end{cases}
\end{align*}
Therefore, we get $Z=0$ and thus $\mathcal{Y}=\mathcal{Y}'$.

\end{proof}

Now, we turn to the proof of the existence of the solution. As already mentioned, we need some estimates on the approximation equation.
\begin{lemm}\label{lem:esty}
    For any $\beta \in (\frac{\alpha_0^2}{4\pi},1)$, $\delta\in(0,1-\beta)$ and $r\in (1,2)$, we have
    \begin{align*}
&\|Y^N \|_{L^r([0,T];H^{1+\delta}(\Lambda) ) \cap C([0,T];H^{1+\delta - \frac{2}r}) \cap C^{\frac{\delta}{2}}([0,T];H^{1 -\frac{2}{r}}(\Lambda) ) }\\
&\lesssim  \|\eta \|_{H^{1+\delta}(\Lambda)} +\| e^{\alpha_0 |Y^N| }\|_{L^{\frac{2r}{2-r}}([0,T];C(\Lambda))} \int_{[-\alpha_0,\alpha_0]}\| \mathcal{X}^{\alpha,N} \|_{L^2([0,T];H^{-\beta}(\Lambda))}\nu(d\alpha),
    \end{align*}
    where the implicit constant does not depend on $N$.
\end{lemm}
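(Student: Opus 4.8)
The plan is to apply the Schauder estimate of Lemma~\ref{lem:schauder} to the mild formulation of \eqref{eqY} and then to control the resulting forcing term in $L^r([0,T];H^{-\beta}(\Lambda))$ by means of the product estimate of Lemma~\ref{lem:prod}. First I would rewrite the drift of \eqref{eqY} using $\mathcal{X}^{\alpha,N}_t = \exp_N^\Diamond(\alpha X^N_t)$, so that $Y^N$ is the mild solution of $\partial_t Y^N_t = \frac12(\Delta-1)Y^N_t + U^N_t$ with $Y^N_0=\eta$, where
\[
U^N_t \coloneqq -\frac12 \int_{[-\alpha_0,\alpha_0]} \alpha\, e^{\alpha Y^N_t}\, \mathcal{X}^{\alpha,N}_t \,\nu(d\alpha).
\]
Lemma~\ref{lem:schauder}, applied with the present $r,\beta,\delta$ (which satisfy its hypotheses, since $\delta\in(0,1-\beta)$ and $r\in(1,\infty]$), bounds the left-hand norm by $\|\eta\|_{H^{1+\delta}(\Lambda)} + \|U^N\|_{L^r([0,T];H^{-\beta}(\Lambda))}$ with a constant independent of $N$, so it remains only to estimate $\|U^N\|_{L^r([0,T];H^{-\beta}(\Lambda))}$.

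To bound the forcing term I would work pointwise in $t$. Using the triangle inequality for the $\nu$-integral together with Lemma~\ref{lem:prod} --- applicable because each $\mathcal{X}^{\alpha,N}_t = \exp(\alpha X^N_t - \tfrac{\alpha^2}{2}C_N)$ is a non-negative smooth function, hence lies in $H^{-\beta}_+(\Lambda)$, while $e^{\alpha Y^N_t}\in C(\Lambda)$ --- and the elementary bounds $|\alpha|\le\alpha_0$ and $|e^{\alpha Y^N_t}|\le e^{\alpha_0|Y^N_t|}$, I obtain
\[
\|U^N_t\|_{H^{-\beta}(\Lambda)} \lesssim \|e^{\alpha_0|Y^N_t|}\|_{C(\Lambda)} \int_{[-\alpha_0,\alpha_0]} \|\mathcal{X}^{\alpha,N}_t\|_{H^{-\beta}(\Lambda)}\,\nu(d\alpha).
\]
The non-negativity of the Wick exponential is precisely what makes the product estimate applicable, and is the structural point of the computation.

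It then remains to take the $L^r$-norm in time and to distribute the two factors by Hölder's inequality. The decisive bookkeeping is the choice of conjugate exponents $p=\frac{2}{2-r}$ and $q=\frac{2}{r}$, which is legitimate because $r\in(1,2)$; for these one has $rp=\frac{2r}{2-r}$ and $rq=2$, so that
\[
\|U^N\|_{L^r([0,T];H^{-\beta})} \lesssim \|e^{\alpha_0|Y^N|}\|_{L^{\frac{2r}{2-r}}([0,T];C(\Lambda))}\, \left\| \int_{[-\alpha_0,\alpha_0]} \|\mathcal{X}^{\alpha,N}_\cdot\|_{H^{-\beta}(\Lambda)}\,\nu(d\alpha)\right\|_{L^2([0,T])}.
\]
Finally I would apply Minkowski's integral inequality to interchange the $L^2([0,T])$-norm with the $\nu$-integral, converting the last factor into $\int_{[-\alpha_0,\alpha_0]}\|\mathcal{X}^{\alpha,N}\|_{L^2([0,T];H^{-\beta}(\Lambda))}\,\nu(d\alpha)$ and yielding the claimed bound. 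Since the constants furnished by Lemma~\ref{lem:schauder}, Lemma~\ref{lem:prod}, Hölder and Minkowski are all independent of $N$, so is the implicit constant in the conclusion. I expect the only genuinely delicate point to be the application of Lemma~\ref{lem:prod}, namely the reliance on the sign of $\mathcal{X}^{\alpha,N}$ and the continuity of $e^{\alpha Y^N}$ at the approximation level; the remainder is routine Hölder/Minkowski manipulation.
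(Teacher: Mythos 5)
Your proposal is correct and follows essentially the same route as the paper: Schauder estimate (Lemma~\ref{lem:schauder}) for the linear part, the product estimate (Lemma~\ref{lem:prod}) exploiting the non-negativity of $\mathcal{X}^{\alpha,N}$, the bound $e^{\alpha Y^N}\le e^{\alpha_0|Y^N|}$, and H\"older in time with exponents $\frac{2r}{2-r}$ and $2$. The only difference is cosmetic --- you apply Minkowski's inequality for the $\nu$-integral after H\"older in time, whereas the paper pulls the $\nu$-integral out of the $L^r$-norm first and then applies H\"older for each fixed $\alpha$ --- and both orderings yield the identical bound with constants independent of $N$.
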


\begin{proof}
   % From the mild formulation of \eqref{eqY}, we have
    %\begin{align*}
%Y^N_t = P_t \eta -\frac{\alpha}{4}\int^t_0 P_{t-s} \left\{e^{\alpha Y_s^N}\mathcal{X}_s^{+,N} -e^{-\alpha Y_s^N}\mathcal{X}_s^{-,N}\right\}ds,
 %   \end{align*}
 %   where $P_t$ is the heat semigroup associated with $\frac{1}{2}(\Delta-1)$.
From the Schauder estimate (Lemma \ref{lem:schauder}) and the product estimate (Lemma \ref{lem:prod}), we have
\begin{align*}
&\|Y^N \|_{L^r([0,T];H^{1+\delta}(\Lambda) ) \cap C([0,T];H^{1+\delta - \frac{2}r}) \cap C^{\frac{\delta}{2}}([0,T];H^{1 -\frac{2}{r}}(\Lambda) ) } \\
&\lesssim \|\eta \|_{H^{1+\delta}(\Lambda)} + \int_{[-\alpha_0,\alpha_0]} |\alpha|\left\|e^{\alpha Y^N}\mathcal{X}^{\alpha,N}\right\|_{L^r([0,T];H^{-\beta}(\Lambda))} \nu(d\alpha)  \\
&\lesssim_{\alpha_0} \|\eta \|_{H^{1+\delta}(\Lambda)} + \int_{[-\alpha_0,\alpha_0]}\| e^{\alpha Y^N }\|_{L^{ \frac{2r}{2-r}   }([0,T];C(\Lambda))}\| \mathcal{X}^{\alpha,N} \|_{L^2([0,T];H^{-\beta}(\Lambda))}\nu(d\alpha)\\
&\le   \|\eta \|_{H^{1+\delta}(\Lambda)} +\| e^{\alpha_0 |Y^N| }\|_{L^{  \frac{2r}{2-r}  }([0,T];C(\Lambda))} \int_{[-\alpha_0,\alpha_0]}\| \mathcal{X}^{\alpha,N} \|_{L^2([0,T];H^{-\beta}(\Lambda))}\nu(d\alpha).
\end{align*}
  
\end{proof}

\begin{lemm}\label{lem:estey}
    For any $p>1$ and $\epsilon>0$, we have
    \begin{align}
&\| e^{\alpha_0 |Y^N| }\|_{L^p([0,T];C(\Lambda))}    \lesssim 1 + \|e^{ \alpha_0 | \eta|} \|_{L^\infty(\Lambda)} + \int_{[-\alpha_0, \alpha_0]\backslash\{0\}}\|C (\nu) \alpha\mathcal{X}^{\alpha, N}  \|_{L^{\frac{\alpha_0 p}{|\alpha|}}([0,T];H^{-1+\epsilon}(\Lambda))}^{\frac{\alpha_0 p}{|\alpha|}}\nu(d\alpha) \label{est:ey}
    \end{align}
    uniformly in $N\in \mathbb{N}$ for some $C(\nu)>0$.
    \end{lemm}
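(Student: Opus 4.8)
The plan is to bound $e^{\alpha_0|Y^N|}\le e^{\alpha_0 Y^N}+e^{-\alpha_0 Y^N}$ and to estimate the two exponentials separately; I describe only $u\coloneqq e^{\alpha_0 Y^N}$, the other being symmetric. For each fixed $N$ the field $X^N$ is a trigonometric polynomial, so $\mathcal{X}^{\alpha,N}=\exp(\alpha X^N-\tfrac{\alpha^2}2C_N)$ is a smooth strictly positive function and $Y^N$ is a classical solution of \eqref{eqY}; hence all the manipulations below are rigorous for fixed $N$, and uniformity in $N$ is recovered only at the end by measuring $\mathcal{X}^{\alpha,N}$ in a negative Sobolev norm rather than in $C(\Lambda)$. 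Using $\tfrac12\alpha_0 u\,\Delta Y^N=\tfrac12\Delta u-\tfrac{\alpha_0^2}2u|\nabla Y^N|^2$ I would first derive
\[
\partial_t u=\tfrac12\Delta u-\tfrac{\alpha_0^2}2u|\nabla Y^N|^2-\tfrac12(\alpha_0 Y^N)u-\tfrac{\alpha_0}2\int_{[-\alpha_0,\alpha_0]}\alpha\,u^{1+\alpha/\alpha_0}\mathcal{X}^{\alpha,N}\,\nu(d\alpha).
\]
The gradient term is $\le0$ and is discarded; the mass term is controlled by $xe^x\ge-e^{-1}$, giving $-\tfrac12(\alpha_0Y^N)u\le\tfrac1{2e}$, a constant; and in the drift integral the contributions with $\alpha>0$ have a favourable sign and are also discarded. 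The decisive point is that the remaining, \emph{unfavourable}, terms (those with $\alpha<0$) are \emph{sublinear} in $u$: since $u^{1+\alpha/\alpha_0}=u^{1-|\alpha|/\alpha_0}$ and $|\alpha|\le\alpha_0$, the exponent $1-|\alpha|/\alpha_0$ lies in $[0,1)$. This sublinearity is exactly what replaces the monotonicity/comparison argument available for one-signed drifts, and is the mechanism by which the sign-indefinite drift is tamed.

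Writing $S_t=e^{t\Delta/2}$ and using that $S$ is positivity preserving, the mild formulation together with the above sign information yields the pointwise upper bound
\[
u_t\le S_t u_0+\int_0^t S_{t-s}\Bigl(\tfrac1{2e}+\tfrac{\alpha_0}2\int_{[-\alpha_0,0)}|\alpha|\,u_s^{1-|\alpha|/\alpha_0}\mathcal{X}^{\alpha,N}_s\,\nu(d\alpha)\Bigr)ds.
\]
To estimate the right-hand side in $C(\Lambda)$ uniformly in $N$, I would invoke the product estimate (Lemma~\ref{lem:prod}): since $u_s^{1-|\alpha|/\alpha_0}\in C(\Lambda)$ and $\mathcal{X}^{\alpha,N}_s\in H^{-1+\epsilon}_+(\Lambda)$ is nonnegative, the product lies in $H^{-1+\epsilon}(\Lambda)$ with norm $\lesssim\|u_s\|_{C}^{1-|\alpha|/\alpha_0}\|\mathcal{X}^{\alpha,N}_s\|_{H^{-1+\epsilon}}$. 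The Schauder/parabolic-smoothing estimate (Lemma~\ref{lem:schauder}) then turns a forcing in $L^p_TH^{-1+\epsilon}$ into a solution in $L^p_TH^{1+\epsilon}\hookrightarrow L^p_TC(\Lambda)$, the embedding holding on the two-dimensional torus because $\epsilon>0$.

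Setting $V\coloneqq\|e^{\alpha_0 Y^N}\|_{L^p_TC}$ and pairing the time integrabilities by Hölder's inequality with the exponents $L^{p/\theta_\alpha}_T\times L^{\alpha_0 p/|\alpha|}_T$, where $\theta_\alpha=1-|\alpha|/\alpha_0$ (note $\tfrac{\theta_\alpha}p+\tfrac{|\alpha|}{\alpha_0 p}=\tfrac1p$), leads to a self-consistent inequality of the form
\[
V\lesssim 1+\|e^{\alpha_0|\eta|}\|_{L^\infty}+\int_{[-\alpha_0,0)}|\alpha|\,V^{\,1-|\alpha|/\alpha_0}\,\|\mathcal{X}^{\alpha,N}\|_{L^{\alpha_0 p/|\alpha|}_TH^{-1+\epsilon}}\,\nu(d\alpha),
\]
the parallel computation for $e^{-\alpha_0 Y^N}$ supplying the $\alpha>0$ part and thereby the full integral over $[-\alpha_0,\alpha_0]\setminus\{0\}$. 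One closes this by Young's inequality with the conjugate pair $\tfrac1{\theta_\alpha},\tfrac1{1-\theta_\alpha}=\tfrac{\alpha_0}{|\alpha|}$: the $V$-factor is absorbed into the left-hand side while the $\mathcal{X}$-factor is retained as a power of $\|\mathcal{X}^{\alpha,N}\|_{L^{\alpha_0 p/|\alpha|}_TH^{-1+\epsilon}}$, and collecting the $\alpha$-dependent Young constants together with the drift weight $|\alpha|$ produces the constant $C(\nu)$ and the additive $1$ in \eqref{est:ey}. Beyond the sign indefiniteness already discussed, I expect the main obstacle to be exactly this final bookkeeping of exponents and constants, matching the power left by Young's inequality to the exponent in \eqref{est:ey}: it is delicate for $\alpha$ near $0$ (where $1-|\alpha|/\alpha_0$ degenerates to $1$, so the Young weight must be balanced against the vanishing factor $|\alpha|$) and for $|\alpha|$ near $\alpha_0$ (where the exponent degenerates to $0$), and it is precisely the drift weight $|\alpha|$ and the choice of time-exponent $\alpha_0 p/|\alpha|$ that render both limits integrable.
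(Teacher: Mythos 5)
Your proposal is correct and follows essentially the same route as the paper's proof: the same PDE for $e^{\pm\alpha_0 Y^N}$ with the sign analysis (discarding the gradient term, bounding the mass term via $xe^x\ge -e^{-1}$, keeping only the opposite-sign part of the drift), the same use of heat-kernel positivity, Lemmas \ref{lem:prod} and \ref{lem:schauder} with the time-H\"older exponent $p(\alpha)=\alpha_0 p/|\alpha|$, and the same closing Young's-inequality absorption exploiting the sublinear power $1-|\alpha|/\alpha_0$. The only points left implicit are cosmetic: Lemma \ref{lem:schauder} gives $H^{1+\delta}$ with $\delta<\epsilon$ rather than $H^{1+\epsilon}$, and passing from the Young exponent $\alpha_0/|\alpha|$ to the exponent $\alpha_0 p/|\alpha|$ in \eqref{est:ey} uses $\|x\|^{\alpha_0/|\alpha|}\le 1+\|x\|^{\alpha_0 p/|\alpha|}$, exactly as the paper does.
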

    \begin{rem}\label{rem:estey}
        We prove in Lemma \ref{lem:stoch} that the third term in the right-hand side of \eqref{est:ey} is bounded in $N\in \mathbb{N}$ almost surely if $p\le \sqrt\frac{16\pi}{\alpha_0^2}$, $\alpha_0^2<4\pi$ and $\epsilon>0$ is sufficiently small. 
    \end{rem}
    
\begin{rem}\label{rem:estey2}
\begin{enumerate}
\item
If either $\mbox{supp}(\nu) \subset [-\alpha_0,0]$ or $\mbox{supp}(\nu) \subset [0,\alpha_0]$ holds, we actually have a ``better'' version of Lemma~\ref{lem:estey} by the following argument. 
Writing the equation \eqref{eqY} in the mild form, 
\begin{align}
    Y_t^N = e^{\frac{t}2(\Delta -1)}\eta - \frac{1}{2}\int^t_0 e^{\frac{1}2(t-s)(\Delta -1) }\int_{[-\alpha_0,\alpha_0]}\alpha \exp\left(\alpha (X_t^N + Y_t^N)-\frac{\alpha^2}2C_N  \right)\nu(d\alpha)ds.\notag
\end{align}
Therefore, if we have either $\mbox{supp}(\nu) \subset [-\alpha_0,0]$ or $\mbox{supp}(\nu) \subset [0,\alpha_0]$, it holds for any $\gamma \in \mbox{supp}(\nu)$ that 
\begin{align}
    \gamma Y_t^N = \gamma e^{\frac{t}2(\Delta -1)}\eta - \frac{1}{2}\int^t_0 e^{\frac{1}2(t-s)(\Delta -1) }\int_{\mbox{supp}(\nu)} \gamma\alpha \exp\left(\alpha (X_t^N + Y_t^N)-\frac{\alpha^2}2C_N  \right)\nu(d\alpha)ds \le \gamma e^{\frac{t}2(\Delta -1)}\eta \notag
    \end{align}
by the positivity of the heat kernel. 
In particular, we have
\begin{align}
    \| e^{\alpha Y_t^N} \|_{L^\infty([0,T];C(\Lambda))} \le e^{\alpha_0 \| \eta \|_{L^\infty(\Lambda)}}
\end{align}
for any $\alpha \in \mbox{supp}(\nu)$. This estimate allows us to directly apply the method used in \cite{HKK23} and consequently, we get the convergence of $Y^N$ in the full $L^1$-regime $\alpha_0^2<8\pi$ in this case.

    \item 
    If we have neither $\mbox{supp}(\nu) \subset [-\alpha_0,0]$ nor $\mbox{supp}(\nu) \subset [0,\alpha_0]$, the techniques in (i) is unavailable, but we can still get the estimate \eqref{est:ey}. However, that forces us to work in the $L^2$-regime $\alpha_0^2<4\pi$ and we do not know how to go beyond it at this point.
    \end{enumerate}

\end{rem}

\begin{proof}[Proof of Lemma \ref{lem:estey}]
For any $\gamma \in \mathbb{R}$, we have 
\[ \partial_t e^{\gamma Y_t^N} = (\partial_t Y_t^N)\gamma e^{\gamma Y_t^N}\ \ \ \mbox{and}\ \ \ \Delta e^{\gamma Y_t^N} = (\Delta Y_t^N)\gamma e^{\gamma Y_t^N} + |\nabla Y_t^N|^2 \gamma^2 e^{\gamma Y_t^N}     .\]
Therefore, since $Y^N \in C^\infty(\Lambda)$ solves the equation \eqref{eqY}, it holds that 
\begin{align}
    \left(\partial_t - \frac{1}2 \Delta \right)e^{\gamma Y_t^N}  &= \left\{\partial_t Y_t^N + \frac{1}2(1-\Delta)Y_t^N  - \frac{1}2 Y_t^N - \frac{\gamma}2|\nabla Y_t^N|^2\right\}\gamma e^{\gamma Y_t^N} \notag \\
    &= -\frac{\gamma^2}2  e^{\gamma Y_t^N}|\nabla Y_t^N|^2 -\frac{\gamma}2 Y_t^N e^{\gamma Y_t^N}  - \frac{\gamma}2e^{\gamma Y_t^N } \int_{[-\alpha_0,\alpha_0]} \alpha e^{\alpha Y_t^N} \mathcal{X}^{\alpha,N}_t \nu(d\alpha) \notag     \\
    &\le e^{-1} + \frac{\gamma}2 \int_{I(\gamma)} \alpha e^{(\gamma+\alpha)Y_t^N}\mathcal{X}_t^{\alpha,N}\nu(d\alpha), \notag
\end{align}
where we define
\begin{gather}
I(\gamma) \coloneqq
\begin{cases}
[-\alpha_0,0)\ \ \ \mbox{if}\ \gamma>0; \\
(0,\alpha_0]\ \ \ \ \ \ \mbox{if}\ \gamma<0
\end{cases}
\end{gather}
and used the inequality $xe^x \ge - e^{-1}$.
In particular, letting $\gamma = \theta \alpha_0$, $\theta \in \{1, -1 \}$, we have
\begin{align*}
\left(\partial_t - \frac{1}2 \Delta \right)e^{\theta \alpha_0 Y_t^N}  &\le 1 + \frac{\theta\alpha_0}2 \int_{I(\theta)} \alpha e^{(\theta\alpha_0+\alpha)Y_t^N}\mathcal{X}_t^{\alpha,N}\nu(d\alpha).
\end{align*}
Therefore, from the positivity of the heat kernel, we have
\begin{align}
   &\| e^{\theta \alpha_0 Y_t^N} \|_{C(\Lambda)}\notag \\
   &\le t + \| e^{\theta \alpha_0 \eta} \|_{L^\infty (\Lambda)} + \left\| \int^t_0 e^{\frac{1}2(t-s)\Delta }\left\{ \frac{\theta\alpha_0}2 \int_{I(\theta)} \alpha e^{(\theta\alpha_0+\alpha)Y_s^N}\mathcal{X}_s^{\alpha,N}\nu(d\alpha)\right\} ds\right\|_{C(\Lambda)}.\label{est:ey1}
\end{align}
Moreover, from the Sobolev embedding $H^{1+\frac{\epsilon}{2}}(\Lambda) \subset C(\Lambda)$ and the Schauder estimate (Lemma \ref{lem:schauder}) and shorthand notations such as $L_T^pH^s(\Lambda)\coloneqq L^p([0,T];H^s(\Lambda))$, it holds that 
\begin{align}
 &\left\| \int^t_0 e^{\frac{1}2(t-s)\Delta }\left\{ \frac{\theta\alpha_0}2 \int_{I(\theta)} \alpha e^{(\theta\alpha_0+\alpha)Y_s^N}\mathcal{X}_s^{\alpha,N}\nu(d\alpha)\right\} ds\right\|_{L_T^pC(\Lambda)}\notag\\
 &\lesssim  \left\| \int^t_0 e^{\frac{1}2(t-s)\Delta }\left\{ \frac{\theta\alpha_0}2 \int_{I(\theta)} \alpha e^{(\theta\alpha_0+\alpha)Y_s^N}\mathcal{X}_s^{\alpha,N}\nu(d\alpha)\right\} ds\right\|_{L_T^pH^{1+ \frac{\epsilon}{2}}(\Lambda)}\notag\\
 &\lesssim_{\alpha_0}  \left\|\int_{I(\theta )} \alpha e^{(\theta\alpha_0+\alpha)Y_t^N}\mathcal{X}_t^{\alpha,N}\nu(d\alpha)\right\|_{L_T^p H^{-1+  \epsilon}(\Lambda)} \notag    \\
 &\lesssim_{\alpha_0} \int_{I(\theta)} \left\|\alpha e^{(\theta\alpha_0+\alpha)Y_t^N}\mathcal{X}_t^{\alpha,N}\right\|_{L_T^p H^{-1+  \epsilon}(\Lambda)}\nu(d\alpha).\label{est:ey2}
 \end{align}
 From the product estimate (Lemma \ref{lem:prod}), for $\alpha \in I(\theta )$
 \begin{align}
     \left\|\alpha e^{(\theta\alpha_0+\alpha)Y_t^N}\mathcal{X}_t^{\alpha,N}\right\|_{L_T^p H^{-1+ \delta + \epsilon}(\Lambda)} &\le \|e^{(\theta\alpha_0 + \alpha)Y_t^N} \|_{L_T^{\frac{\theta\alpha_0}{\theta\alpha_0 + \alpha}p }C(\Lambda)} \| \alpha \mathcal{X}_t^{\alpha,N}\|_{L_T^{p(\alpha)}H^{-1+ \epsilon}(\Lambda) }\notag \\
     &= \| e^{\theta \alpha_0 Y_t^N}\|_{L_T^{p}C(\Lambda)}^{\frac{\theta\alpha_0 + \alpha}{\theta \alpha_0}} \|\alpha \mathcal{X}_t^{\alpha,N}\|_{L_T^{p(\alpha)}H^{-1+ \epsilon}(\Lambda) },\label{est:ey3}
     \end{align}
 where $p(\alpha)$ is defined by the equality
 \[ \frac{\theta \alpha_0 + \alpha}{\theta \alpha_0 p} + \frac{1}{p(\alpha)} = \frac{1}{p} \iff p(\alpha)= -  \frac{\theta\alpha_0p}{\alpha} = \frac{\alpha_0p}{|\alpha|}.   \]
 Therefore, we have from \eqref{est:ey1}--\eqref{est:ey3} and Young's inequality that 
 \begin{align*}
   &\| e^{\theta \alpha_0 Y_t^N} \|_{L_T^p C(\Lambda)} \\
   %&\le \| e^{\pm \alpha_0\eta} \|_{L^\infty (\Lambda)} + C\int_{I(\pm 1)}\| e^{\pm \alpha_0 Y_t^N}\|_{L_T^{\frac{\pm\alpha_0 + \alpha}{\pm \alpha_0}p}C(\Lambda)}^{\frac{\pm\alpha_0 + \alpha}{\pm \alpha_0}} \| \mathcal{X}_t^{\alpha,N}\|_{L_T^{p(\alpha)}H^{-1+\delta + \epsilon}(\Lambda) } \nu(d\alpha)\\
   &\le t+ \| e^{\theta \alpha_0\eta} \|_{L^\infty (\Lambda)} + C\int_{I(\theta)}\| e^{\theta\alpha_0 Y_t^N}\|_{L_T^{p}C(\Lambda)}^{\frac{\theta\alpha_0 + \alpha}{\theta \alpha_0}} \|\alpha \mathcal{X}_t^{\alpha,N}\|_{L_T^{p(\alpha)}H^{-1+ \epsilon}(\Lambda) } \nu(d\alpha)\\
   &\le t+ \| e^{\theta \alpha_0\eta} \|_{L^\infty (\Lambda)}\\
   &\ + C \int_{I(\theta)} \left\{ \frac{\theta \alpha_0 + \alpha}{\theta \alpha_0} \lambda(\alpha) \| e^{\theta \alpha_0 Y^N}\|_{L_T^p C(\Lambda)} - \frac{\theta\alpha}{\alpha_0} \left( \lambda(\alpha)^{-\frac{\theta \alpha_0 + \alpha}{\theta \alpha_0}} \|  \alpha \mathcal{X}_t^{\alpha, N}  \|_{L_T^{p(\alpha)}H^{-1+\epsilon}(\Lambda)}  \right)^{\frac{\theta \alpha_0}{-\alpha}}\right\} \nu(d\alpha)
   \end{align*}
   for some $C>0$ and any $\lambda(\alpha)>0$.
   Thus, letting $\lambda(\alpha) \coloneqq \frac{1}{2C(\nu[-\alpha_0,\alpha_0] + 1)}\times \frac{\theta \alpha_0}{\theta \alpha_0 + \alpha}$, we get
   \begin{align*}
       &\left( 1 -  \frac{\nu[-\alpha_0,\alpha_0]}{2(\nu[-\alpha_0,\alpha_0]+1)}   \right)\| e^{\theta \alpha_0 Y_t^N} \|_{L_T^p C(\Lambda)} \\
       &\le t+  \| e^{\theta \alpha_0\eta} \|_{L^\infty (\Lambda)} + C\int_{I(\theta)} \frac{|\alpha|}{\alpha_0} \left\{  2C(\nu[-\alpha_0,\alpha_0] + 1)\frac{\theta \alpha_0 + \alpha}{\theta \alpha_0}   \right\}^{\frac{\theta \alpha_0 + \alpha}{\theta \alpha_0}\times \frac{\theta \alpha_0}{-\alpha}}\| \alpha \mathcal{X}_t^{\alpha, N} \|_{L_T^{p(\alpha)}H^{-1+\epsilon}(\Lambda)}^{\frac{\theta \alpha_0}{-\alpha}}\nu(d\alpha)\\
       &\le T+ C_1(\nu) +  \| e^{\theta \alpha_0\eta} \|_{L^\infty (\Lambda)} + C\int_{I(\theta)} \| C_2(\nu)\alpha \mathcal{X}_t^{\alpha, N} \|_{L_T^{p(\alpha)}H^{-1+\epsilon}(\Lambda)}^{p(\alpha)}\nu(d\alpha)
       \end{align*}
       for some constant $C_1(\nu)$ and 
       \[ C_2(\nu) \coloneqq \sup_{\alpha\in I(\theta)}\left\{ 2C(\nu[-\alpha_0,\alpha_0]+1)\frac{\theta\alpha_0 + \alpha}{\theta \alpha_0} \right\}^{\frac{\theta\alpha_0 +\alpha}{\theta\alpha_0}}<\infty.    \]
\end{proof}

Finally, we prove the convergence of $Y^N$ and the existence of the solution to \eqref{eqY2}.
\begin{lemm}\label{lem:conv}
    Let $\alpha_0^2<4\pi$. For any $\beta \in (\frac{\alpha_0^2}{4\pi},1)$, $\eta\in H^{2-\beta}(\Lambda)$ and $T>0$, $Y^N$ converges to the unique mild solution $Y \in \mathscr{Y}_T$ of  \eqref{eqY2}  in $L^1([0,T];H^{1}(\Lambda)\cap C(\Lambda) )\cap C([0,T];H^{-\beta}(\Lambda))$ almost surely as $N\rightarrow \infty$. 
    \end{lemm}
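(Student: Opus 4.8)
The plan is a compactness argument. I would derive $N$-uniform bounds on $Y^N$ in a space that embeds compactly into the target space $L^1([0,T];H^1(\Lambda)\cap C(\Lambda))\cap C([0,T];H^{-\beta}(\Lambda))$, extract a subsequential limit, identify it as a mild solution of \eqref{eqY2} lying in $\mathscr{Y}_T$, and finally invoke the uniqueness Lemma~\ref{lem:unique} to upgrade subsequential convergence to convergence of the full sequence.

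First I would assemble the a priori estimates. By Remark~\ref{rem:estey} (established through the stochastic bound Lemma~\ref{lem:stoch}), for $\alpha_0^2<4\pi$ and $\epsilon$ small the last term of \eqref{est:ey} is a.s.\ bounded in $N$ for every $p\le\sqrt{16\pi/\alpha_0^2}$; inserted into Lemma~\ref{lem:estey} this gives $\sup_N\|e^{\alpha_0|Y^N|}\|_{L^p_TC(\Lambda)}<\infty$ a.s.\ in that range. Since $\alpha_0^2<4\pi$ makes $\sqrt{16\pi/\alpha_0^2}>2$, I can fix $r\in(1,2)$ close to $1$ with $\frac{2r}{2-r}\le\sqrt{16\pi/\alpha_0^2}$ and $\delta\in(0,1-\beta)$ close to $1-\beta$; feeding this exponential bound together with the $N$-uniform bound on $\int\|\mathcal{X}^{\alpha,N}\|_{L^2_TH^{-\beta}}\nu(d\alpha)$ (from Lemma~\ref{lem:weightedexp}, Cauchy--Schwarz and finiteness of $\nu$) into Lemma~\ref{lem:esty} produces a uniform bound on $Y^N$ in $L^r([0,T];H^{1+\delta}(\Lambda))\cap C([0,T];H^{1+\delta-2/r}(\Lambda))\cap C^{\delta/2}([0,T];H^{1-2/r}(\Lambda))$. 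The parameters must be balanced so that $1+\delta-2/r>-\beta$; this is exactly where $\alpha_0<\sqrt{4\pi}$ is used and forces the choice of $\delta$ near $1-\beta$.

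Next comes compactness. On the torus $H^{1+\delta}$ embeds compactly into $H^1(\Lambda)\cap C(\Lambda)$ (since $1+\delta>1$), and $H^{1+\delta-2/r}$ embeds compactly into $H^{-\beta}(\Lambda)$; combined with the H\"older-in-time control from $C^{\delta/2}([0,T];H^{1-2/r})$, an Aubin--Lions--Simon / Arzela--Ascoli argument extracts a subsequence $Y^{N_k}$ converging in $L^1([0,T];H^1\cap C)\cap C([0,T];H^{-\beta})$ to some $Y$. Passing to a further subsequence, $Y^{N_k}_t\to Y_t$ in $C(\Lambda)$ for a.e.\ $t$; Fatou together with the uniform $L^2_TC$ bound on $e^{\alpha_0|Y^{N_k}|}$ then places $Y\in\mathscr{Y}_T$.

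The crux --- and the step I expect to be the \emph{main obstacle} --- is to identify $Y$ as the mild solution of \eqref{eqY2}, i.e.\ to pass to the limit in the drift $\int_{[-\alpha_0,\alpha_0]}\alpha e^{\alpha Y^{N_k}}\mathcal{X}^{\alpha,N_k}\nu(d\alpha)$. I would write $e^{\alpha Y^{N_k}}\mathcal{X}^{\alpha,N_k}-e^{\alpha Y}\mathcal{X}^\alpha=e^{\alpha Y^{N_k}}(\mathcal{X}^{\alpha,N_k}-\mathcal{X}^\alpha)+(e^{\alpha Y^{N_k}}-e^{\alpha Y})\mathcal{X}^\alpha$ and bound each summand in $H^{-\beta}$ by the product estimate Lemma~\ref{lem:prod}, namely by $\|e^{\alpha Y^{N_k}}\|_C\|\mathcal{X}^{\alpha,N_k}-\mathcal{X}^\alpha\|_{H^{-\beta}}$ and $\|e^{\alpha Y^{N_k}}-e^{\alpha Y}\|_C\|\mathcal{X}^\alpha\|_{H^{-\beta}}$. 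Integrating in $t$, integrating against $\nu$ and applying H\"older, the first term is controlled by the uniform exponential bound times $\|\mathcal{X}^{\alpha,N_k}-\mathcal{X}^\alpha\|_{L^2_{T,\alpha_0}H^{-\beta}}\to0$ (Lemma~\ref{lem:weightedexp}), while the second uses $e^{\alpha Y^{N_k}}\to e^{\alpha Y}$ in $L^2_TC$ (from the a.e.-in-$(t,x)$ convergence upgraded by dominated convergence against the uniform exponential bound) tested against $\|\mathcal{X}^\alpha\|_{L^2_{T,\alpha_0}H^{-\beta}}$. The delicate points are keeping every estimate uniform over $\alpha\in[-\alpha_0,\alpha_0]$ so the $\nu$-integral survives, and handling the sign-indefinite weight $\alpha$ --- precisely where the $L^2$-regime enters through the admissible exponents $p(\alpha)=\alpha_0p/|\alpha|$. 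Once the drift converges in $L^r([0,T];H^{-\beta})$, the mild formulation passes to the limit, $Y$ solves \eqref{eqY2} in $\mathscr{Y}_T$, and Lemma~\ref{lem:unique} forces every subsequential limit to coincide with this unique $Y$, giving convergence of the whole sequence $Y^N\to Y$.
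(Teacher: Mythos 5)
Your overall architecture is exactly that of the paper: $N$-uniform bounds from Lemmas~\ref{lem:esty} and \ref{lem:estey} combined with the stochastic estimate (Remark~\ref{rem:estey}/Lemma~\ref{lem:stoch}) for $r$ close to $1$ so that $\frac{2r}{2-r}\le\sqrt{16\pi/\alpha_0^2}$, compactness to extract a subsequential limit $Y$, identification of $Y$ as a mild solution of \eqref{eqY2} in $\mathscr{Y}_T$ (with $Y\in\mathscr{Y}_T$ via Fatou), and then Lemma~\ref{lem:unique} to upgrade to convergence of the full sequence. However, the step you yourself flag as the crux contains a genuine error: you bound the summand $e^{\alpha Y^{N_k}}(\mathcal{X}^{\alpha,N_k}-\mathcal{X}^\alpha)$ by $\|e^{\alpha Y^{N_k}}\|_{C(\Lambda)}\|\mathcal{X}^{\alpha,N_k}-\mathcal{X}^\alpha\|_{H^{-\beta}(\Lambda)}$ ``by Lemma~\ref{lem:prod}'', but that lemma gives $\|fg\|_{H^{-\beta}}\lesssim\|f\|_{C}\|g\|_{H^{-\beta}}$ only for \emph{non-negative} $g$. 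The difference $\mathcal{X}^{\alpha,N_k}-\mathcal{X}^\alpha$ of two non-negative distributions is signed, and for signed $g\in H^{-\beta}$ the estimate is false in general: multiplication by a merely continuous function is not bounded on $H^{-\beta}$ (positivity is the whole point of Lemma~\ref{lem:prod}), and one cannot repair this via a decomposition into positive and negative parts, whose $H^{-\beta}$-norms need not be controlled by that of the difference. Nor can you instead expand the term as $e^{\alpha Y^{N_k}}\mathcal{X}^{\alpha,N_k}-e^{\alpha Y^{N_k}}\mathcal{X}^\alpha$ and invoke convergence of the products, since that is precisely what is being proved.

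The repair is to split in the opposite order, which is in effect what the paper does: write $e^{\alpha Y^{N_k}}\mathcal{X}^{\alpha,N_k}-e^{\alpha Y}\mathcal{X}^\alpha=(e^{\alpha Y^{N_k}}-e^{\alpha Y})\mathcal{X}^{\alpha,N_k}+e^{\alpha Y}(\mathcal{X}^{\alpha,N_k}-\mathcal{X}^\alpha)$. The first term is legitimately covered by Lemma~\ref{lem:prod}, since the distributional factor $\mathcal{X}^{\alpha,N_k}$ is non-negative; it is controlled by $\|e^{\alpha Y^{N_k}}-e^{\alpha Y}\|_{C(\Lambda)}\|\mathcal{X}^{\alpha,N_k}\|_{H^{-\beta}(\Lambda)}$, using the uniform bound on $\|\mathcal{X}^{\alpha,N_k}\|_{L^2(\lambda_T\otimes\nu;H^{-\beta}(\Lambda))}$ together with $e^{\alpha Y^{N_k}}\to e^{\alpha Y}$ in $L^2(\lambda_T\otimes\nu;C(\Lambda))$; note that this last convergence is a uniform-integrability (Vitali) argument based on the uniform $L^{2+\epsilon}$ bound, not dominated convergence, as there is no fixed dominating function. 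For the second term, where a fixed continuous factor multiplies a signed difference of elements of $H_+^{-\beta}(\Lambda)$, one must use the \emph{continuity} of the product map on $C(\Lambda)\times H_+^{-\beta}(\Lambda)$ asserted in the second half of Lemma~\ref{lem:prod} (concretely: approximate $e^{\alpha Y_t}$ uniformly by smooth functions and use that multiplication by a smooth function is bounded on $H^{-\beta}$), rather than the naive norm bound. With this reorganization the drift converges and the rest of your argument goes through; a further minor correction is that $\alpha_0^2<4\pi$ enters through the admissible exponents in Lemma~\ref{lem:stoch} and through Lemma~\ref{lem:weightedexp} (which needs $\beta>\frac{\alpha_0^2}{4\pi}$), not through the embedding-index condition $1+\delta-\frac{2}{r}>-\beta$, which only requires $r>1$ and $\delta$ close to $1-\beta$.
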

    \begin{rem}\label{rem:lemconv}
        From Lemmas \ref{lem:conv} and \ref{lem:ou-wick}, we get that $\Phi^N = X^N + Y^N$ converges to $\Phi \coloneqq X+Y$ in $C([0,T];H^{-\beta}(\Lambda))$ almost surely as $N\rightarrow \infty$, which is the statement of Theorem \ref{thm1}.
    \end{rem}
\begin{proof}[Proof of Lemma \ref{lem:conv}]
We proved in Lemma \ref{lem:esty} that for any $r\in (1,2)$ and $\delta \in (0,1-\beta)$, we have
\begin{align}
&\|Y^N \|_{L^r([0,T];H^{1+\delta}(\Lambda) ) \cap C([0,T];H^{1+\delta - \frac{2}r}) \cap C^{\frac{\delta}{2}}([0,T];H^{1 -\frac{2}{r}}(\Lambda) ) }\notag\\
&\lesssim  \|\eta \|_{H^{1+\delta}(\Lambda)} +\| e^{\alpha_0 |Y^N| }\|_{L^{\frac{2r}{2-r}}([0,T];C(\Lambda))} \int_{[-\alpha_0,\alpha_0]}\| \mathcal{X}^{\alpha,N} \|_{L^2([0,T];H^{-\beta}(\Lambda))}\nu(d\alpha).\label{est:yn}
\end{align}
Moreover, by taking $r$ close enough to 1, we have from Lemmas \ref{lem:estey} and \ref{lem:stoch} that the right-hand side of \eqref{est:yn} is bounded in $N\in\mathbb{N}$ almost surely. Therefore, letting $(\Omega,\mathcal{F},\mathbb{P})$ be the underlying probability space on which the driving noise $W$ of the equation is defined, there exists an event $\Omega_0 \in \mathcal{F}$ with $\mathbb{P}(\Omega_0)=1$ such that for any $\omega\in \Omega_0$ we have
\begin{align*}
    \sup_{N\in \mathbb{N}}\|Y^N (w)\|_{L^r([0,T];H^{1+\delta}(\Lambda) ) \cap C([0,T];H^{1+\delta - \frac{2}r}) \cap C^{\frac{\delta}{2}}([0,T];H^{1 -\frac{2}{r}}(\Lambda) ) } < \infty.
    \end{align*}
In the following, we fix $\omega \in \Omega_0$.
Since the embeddings 
\[  L^1([0,T];H^{1+\delta}(\Lambda))\cap C^{\frac{\delta}{2}}([0,T];H^{-1}(\Lambda)) \hookrightarrow L^1([0,T]; H^{1+\delta'}(\Lambda)) ,    \]
\[C([0,T];H^{\delta-1}(\Lambda))\cap C^{\frac{\delta}{2}}([0,T];H^{-1}(\Lambda)) \hookrightarrow C([0,T]; H^{-\beta'}(\Lambda))\]
are compact for any $-\beta>\delta-1>-\beta'>-1$ and $\delta>\delta'$, (see \cite[Lemma 3.9]{HKK21a}) there exists a subsequence $Y^{N_k}$ such that it converges to some $Y$ in $L^1([0,T];H^{1+\delta'}(\Lambda) )\cap C([0,T];H^{-\beta'}(\Lambda))$. %(Note that the subsequence is taken pathwisely.) 
Once we prove that $Y$ solves the equation \eqref{eqY} in the space $\mathscr{Y}_T$, we get the desired result from the uniqueness of the solution to the equation, which we proved in Lemma \ref{lem:unique}. In the following, we prove that $e^{\alpha Y^{N_k}}$ converges to $e^{\alpha Y}$ in $L^2_{T,\alpha_0} C(\Lambda) \coloneqq L^2(\lambda_T\otimes \nu;C(\Lambda))$, where $\lambda_T$ is the Lebesgue measure on $[0,T]$.
Then, in view of Lemmas \ref{lem:weightedexp} and \ref{lem:prod}, by taking the limit in the mild form of the approximation equation 
\begin{align*}
  Y^{N_k}_t = e^{\frac{t}2 (\Delta-1)} \eta + \int^t_0 e^{\frac{t-s}{2}(\Delta-1)} \int_{[-\alpha_0,\alpha_0]}\alpha e^{\alpha Y^{N_k}_s} \mathcal{X}^{\alpha,N_k}_s\nu(d\alpha)ds,  
\end{align*}
we can see that $Y$ is the mild solution of the equation \eqref{eqY2}. %From Lemma \ref{lem:estey}  (and Remark \ref{rem:estey}), it holds that 
Note that we now have
\begin{equation}\label{eq:unif}
\sup_{k\in \mathbb{N}} \|e^{\alpha Y^{N_k}}   \|_{L^{2+\epsilon}_{T,\alpha_0}C(\Lambda)} \lesssim_\nu   \sup_{k\in \mathbb{N}}\| e^{\alpha_0 |Y^{N_k}|} \|_{L_T^{\frac{2r}{2-r}}C(\Lambda)} < \infty 
\end{equation}
for sufficiently small $\epsilon>0$. On the other hand, from the convergence $Y^{N_k}\rightarrow Y$ in $L_T^1 H^{1+\delta'}(\Lambda)$, we have
\begin{align*}
\left\|\alpha Y^{N_k} - \alpha Y   \right\|_{L_{T,\alpha_0}^1 H^{1+\delta'}(\Lambda)} = \int |\alpha| \nu(d\alpha)\left\| Y^{N_k} -  Y   \right\|_{L_{T}^1 H^{1+\delta'}(\Lambda)} \rightarrow 0
\end{align*}
as $k\rightarrow \infty$.
Therefore, by taking a subsequence, we can assume that $\alpha Y^{N_k}_t$ converges to $\alpha Y_t$ in $H^{1+\delta'}(\Lambda) \subset C(\Lambda)$ for $\lambda_T \otimes \nu$-almost every $(t,\alpha)\in [0,T]\times [-\alpha_0,\alpha_0]$ and thus $e^{\alpha Y^{N_k}_t}$ converges to $e^{\alpha Y_t}$ in $C(\Lambda)$ for $\lambda_T \otimes \nu$-almost everywhere.  Therefore, from the convergence theorem for uniformly integrable functions, we get the desired convergence. Note that $Y\in\mathscr{Y}_T$ follows from
\[  \| e^{\alpha_0 |Y|}\|_{L^2_TC(\Lambda)} \le \liminf_{k\rightarrow \infty}\| e^{\alpha_0 |Y^{N_k}|}\|_{L^{2}_TC(\Lambda)}<\infty  \]
by Fatou's lemma and \eqref{eq:unif}.
\end{proof}

\section{Stochastic estimates}\label{sec:stoch}
In this section, we prove Lemmas \ref{lem:weightedexp} and \ref{lem:stoch} below, which we used in Section \ref{sec:gwp}. The proof of Lemma \ref{lem:weightedexp} is similar to the proof of Lemma \ref{lem:stoch} and simpler, so we omit it.
Recall that we defined
\[ \mathcal{X}^{\alpha,N}_t \coloneqq \exp_N^{\Diamond}{(\alpha X^N_t)} \coloneqq \exp\left( \alpha P_N X_t - \frac{\alpha^2}2 C_N \right)   \]
and $P_N$ was introduced in \eqref{eq:appro}.

\begin{lemm}\label{lem:stoch}
   Let $\alpha_0^2<4\pi$. We have
    \begin{align*}
        \sup_{N\in \mathbb{N}} \int_{[-\alpha_0, \alpha_0]\backslash\{0\}} \left\|C(\alpha)  \mathcal{X}^{\alpha,N}\right\|^{p(\alpha)}_{L^{p(\alpha)}([0,T];H^{-1+\epsilon}(\Lambda))} \nu(d\alpha) < \infty
    \end{align*}
    almost surely for sufficiently small $\epsilon>0$, where $p(\alpha) \coloneqq \frac{p\alpha_0 }{|\alpha|}$, $p\in[2,\sqrt {\frac{16\pi}{\alpha_0^2}})$ and $C(\alpha)$ is any real-valued function such that $0\le C(\alpha) \lesssim |\alpha|$.
\end{lemm}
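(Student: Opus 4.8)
The plan is to reduce the almost sure uniform bound to a single expectation estimate, to prove that estimate by a Littlewood--Paley/Gaussian-chaos computation in which the dependence on $\alpha$ and on the exponent $p(\alpha)$ is tracked explicitly, and finally to upgrade from the expectation bound to an almost sure bound along the sequence $N$. First I would take expectations and apply Tonelli together with the stationarity of $\{X_t\}_{t\ge 0}$: since the law of $\mathcal{X}^{\alpha,N}_t=\exp(\alpha P_N X_t-\tfrac{\alpha^2}2 C_N)$ does not depend on $t$,
\[
\mathbb{E}\int_{[-\alpha_0,\alpha_0]\setminus\{0\}} \|C(\alpha)\mathcal{X}^{\alpha,N}\|_{L^{p(\alpha)}([0,T];H^{-1+\epsilon}(\Lambda))}^{p(\alpha)}\,\nu(d\alpha)= T\int_{[-\alpha_0,\alpha_0]\setminus\{0\}} C(\alpha)^{p(\alpha)}\,\mathbb{E}\bigl[\|\mathcal{X}^{\alpha,N}_0\|_{H^{-1+\epsilon}}^{p(\alpha)}\bigr]\,\nu(d\alpha).
\]
It then suffices to bound the right-hand side uniformly in $N$, and afterwards to remove the expectation.

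The core is a uniform moment bound for the Wick exponential of the Gaussian free field $\mathcal{X}^{\alpha,N}_0=\exp^{\Diamond}_N(\alpha\xi)$. Writing $s=1-\epsilon$ and $q=p(\alpha)$, I would use $H^{-s}=B^{-s}_{2,2}$ and the torus embedding $B^{-s}_{q,q}\hookrightarrow H^{-s-\delta}$ (valid for $q\ge 2$, here $p(\alpha)\ge p\ge 2$), and estimate by Littlewood--Paley
\[
\mathbb{E}\|\mathcal{X}^{\alpha,N}_0\|_{B^{-s}_{q,q}}^{q}=\sum_j 2^{-sqj}\int_\Lambda \mathbb{E}\bigl[|\rho_j\mathcal{X}^{\alpha,N}_0(x)|^{q}\bigr]\,dx .
\]
For even integer $q$ the inner expectation comes from $\mathbb{E}[\prod_i \mathcal{X}^{\alpha,N}_0(y_i)]=\exp(\alpha^2\sum_{i<i'}G_N(y_i-y_{i'}))$ with $G_N(y)\le \tfrac1{2\pi}\log_+\tfrac1{|y|}+C$ uniformly in $N$; rescaling $y_i=x+2^{-j}z_i$ yields $\mathbb{E}|\rho_j\mathcal{X}^{\alpha,N}_0(x)|^q\lesssim e^{C\alpha^2 q^2}I_q\,2^{j\alpha^2 q(q-1)/(4\pi)}$ with $I_q=\int \prod_i\phi(z_i)\prod_{i<i'}|z_i-z_{i'}|^{-\alpha^2/(2\pi)}\,dz$. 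The $j$-sum converges provided $s>\tfrac{\alpha^2}{4\pi}(q-1)$ (consistent with Lemma \ref{lem:ou-wick}), and $I_q$ is finite provided the clustering condition $\tfrac{\alpha^2}{2\pi}q<4$ holds; non-integer $q=p(\alpha)$ is then handled by Kahane's convexity inequality (or interpolation from even integers, the constraints being open).

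The decisive step is to insert $q=p(\alpha)=p\alpha_0/|\alpha|$ and integrate in $\alpha$. The algebraic point is that the controlling quantities become $\alpha$-robust: $\alpha^2 q^2=p^2\alpha_0^2$, while $\tfrac{\alpha^2}{2\pi}q=\tfrac{p\alpha_0|\alpha|}{2\pi}$ and $\tfrac{\alpha^2}{4\pi}(q-1)$ are maximized at $|\alpha|=\alpha_0$. Both the summability requirement $\tfrac{\alpha^2}{4\pi}(p(\alpha)-1)<1$ (i.e. $p<1+4\pi/\alpha_0^2$ at the endpoint) and the chaos-finiteness requirement $\tfrac{\alpha^2}{2\pi}p(\alpha)<4$ (i.e. $p<8\pi/\alpha_0^2$) are \emph{implied} by the standing hypothesis $p<\sqrt{16\pi/\alpha_0^2}$, equivalently $p^2\alpha_0^2<16\pi$, since $(\sqrt v-1)^2\ge 0$ with $v=4\pi/\alpha_0^2$; the assumption $\alpha_0^2<4\pi$ keeps $\tfrac{\alpha^2}{2\pi}<2$ throughout. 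Near $\alpha=0$, where $q\to\infty$, the prefactor $C(\alpha)^{p(\alpha)}\lesssim|\alpha|^{p\alpha_0/|\alpha|}=\exp\bigl(\tfrac{p\alpha_0}{|\alpha|}\log|\alpha|\bigr)$ decays super-exponentially and dominates the at-most-exponential growth $I_q\lesssim C^q$, so the $\alpha$-integral of $C(\alpha)^{p(\alpha)}\mathbb{E}\|\mathcal{X}^{\alpha,N}_0\|^{p(\alpha)}_{H^{-1+\epsilon}}$ converges uniformly in $N$.

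To pass from the uniform expectation bound to the almost sure finiteness of $\sup_N$, I would apply the same moment estimates to the differences $\mathcal{X}^{\alpha,N}-\mathcal{X}^{\alpha,M}$, which converge as in Lemma \ref{lem:ou-wick} with geometric gain in $A^{N\wedge M}$; this makes the integrals Cauchy in $L^1(\Omega)$ with a summable rate, so Borel--Cantelli gives almost sure convergence of the integrals and hence $\sup_N(\cdots)<\infty$ almost surely. The main obstacle will be the quantitative, $q$- and $\alpha$-uniform control of the chaos moment right up to the endpoint $|\alpha|=\alpha_0$, the near-critical regime where both the $L^2$-restriction $\alpha_0^2<4\pi$ and the sharp exponent $p<\sqrt{16\pi/\alpha_0^2}$ are genuinely used; by contrast the $\alpha\to0$ corner, though it drives $q\to\infty$, is ultimately tamed by the super-exponential gain from $C(\alpha)^{p(\alpha)}$.
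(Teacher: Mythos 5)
Your overall architecture coincides with the paper's: reduce to an expectation bound via stationarity and Tonelli, exploit the algebra $\alpha^2(p(\alpha)-1)=p\alpha_0|\alpha|-\alpha^2$ so that $p^2\alpha_0^2<16\pi$ makes all constraints hold uniformly in $\alpha$, let the super-exponential decay of $C(\alpha)^{p(\alpha)}$ absorb the $\alpha\to0$ corner, and upgrade to an almost sure statement through difference estimates and Borel--Cantelli. Where you genuinely diverge is the core moment bound: the paper expands $\exp^{\Diamond}_N(\alpha\phi)$ into Wiener chaos, applies H\"older in the chaos index and hypercontractivity to reduce $p(\alpha)$-th moments to second moments, and then quotes the $L^2$-estimate of \cite[Theorem 2.2]{HKK21a}, which already carries the geometric factor $A^{-\lambda N}$; the only constraint this produces is $\alpha^2(p(\alpha)-1)<4\pi$, exactly the one you verify. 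You instead run a Littlewood--Paley/Coulomb-gas computation on the chaos measure itself. The chaos-plus-hypercontractivity route buys, essentially for free, two things your route must work hard for: arbitrary real moment exponents $q=p(\alpha)$, and a quantitative rate for the differences $\mathcal{X}^{\alpha,N+1}-\mathcal{X}^{\alpha,N}$.

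These are precisely the two places where your sketch has genuine gaps. First, non-integer moments: the fallback ``interpolation from even integers, the constraints being open'' fails in exactly the near-critical regime you identify as decisive. Interpolation (or rounding up) needs the clustering condition at the next even integer above $q$, hence at order up to $q+2$; at $|\alpha|$ near $\alpha_0$ with $p=2$ this requires $4\alpha_0^2<8\pi$, which is false whenever $\alpha_0^2>2\pi$. So for $|\alpha|$ slightly below $\alpha_0$, where $q=p\alpha_0/|\alpha|$ is slightly above $2$, there is no even integer whose moment exists, and you are forced onto the Kahane-convexity/cascade argument for all real $q<8\pi/\alpha^2$ --- moreover in a quantitative form, with constants tracked uniformly along the curve $(q,\alpha)=(p\alpha_0/|\alpha|,\alpha)$, including $q\to\infty$ as $\alpha\to0$. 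That is a substantial piece of work which your proposal treats as a black box. Second, the almost-sure upgrade: you cite Lemma \ref{lem:ou-wick} (iii) for ``geometric gain in $A^{N\wedge M}$'', but that lemma is a qualitative convergence statement for fixed $\alpha$, with no rate and no uniformity in $\alpha$. Within your framework you would have to redo the Coulomb-gas computation for the mixed correlations of $\mathcal{X}^{\alpha,N+1}-\mathcal{X}^{\alpha,N}$ to extract a decay rate; the paper instead inherits it from the $L^2$-orthogonality estimate plus hypercontractivity, and this is also where the largeness of $A$ enters (so that the factor $2^{p(\alpha)N}$, produced by telescoping the $p(\alpha)$-th powers, is dominated by $A^{-\lambda p(\alpha)N/2}$). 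Neither gap is unfixable, but both require real arguments that the sketch does not supply.
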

\begin{rem}
    Note that if the finite Borel measure $\nu$ is of the form
    \[   \nu = \sum_{m=1}^M a_m \delta_{\alpha_m},\ \ \ a_m\in\mathbb{R}_+, \ \alpha_m \in [-\alpha_0,\alpha_0], \ M\in\mathbb{N}, \]
    the assertion of Lemma \ref{lem:stoch} directly follows from Lemma \ref{lem:ou-wick} (iii).
\end{rem}
Before proceeding to the proof of Lemma \ref{lem:stoch}, we prepare the following simple lemma.
\begin{lemm}\label{lem:sum}
    Let $p\ge 1$ and $0<c<1$. One has
    \[ \sum_{n=1}^\infty n^p c^{pn} \le d(c)^p  \]
    for some $d(c)>0$ which depends only on $c$.
\end{lemm}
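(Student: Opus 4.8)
The plan is to recognize the series as (the $p$-th power of) an $\ell^p$-norm of a fixed, $p$-independent sequence, and then to invoke the elementary monotonicity of $\ell^p$-norms. The starting point is the factorization $n^p c^{pn} = (nc^n)^p$, so that, writing $a_n := nc^n \ge 0$,
\[ \sum_{n=1}^\infty n^p c^{pn} = \sum_{n=1}^\infty a_n^p = \| a \|_{\ell^p}^p. \]
Since $0<c<1$, the sequence $a=(a_n)_{n\ge1}$ is summable, with $\sum_{n=1}^\infty a_n = \sum_{n=1}^\infty n c^n = c/(1-c)^2 =: d(c) < \infty$, a quantity depending only on $c$.

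The single analytic step is the bound $\| a \|_{\ell^p} \le \| a \|_{\ell^1}$, valid for every $p \ge 1$. I would justify it directly: setting $S := \sum_m a_m$ (which we may assume positive, the case $S=0$ being trivial), each ratio $a_n/S$ lies in $[0,1]$, so $(a_n/S)^p \le a_n/S$ because $p \ge 1$; summing over $n$ gives $\sum_n (a_n/S)^p \le 1$, i.e.\ $\| a\|_{\ell^p}^p \le S^p$. Combining this with the display above yields
\[ \sum_{n=1}^\infty n^p c^{pn} \le \Big( \sum_{n=1}^\infty n c^n \Big)^p = d(c)^p, \]
which is exactly the asserted inequality, with the explicit constant $d(c) = c/(1-c)^2$.

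There is essentially no genuine obstacle here: the only thing to get right is the reformulation that places the entire $p$-dependence in the exponent of a fixed summable sequence, after which the $\ell^1$--$\ell^p$ comparison closes the argument. If one preferred to avoid even this, an equally short alternative would be to fix any $c' \in (c,1)$, use the boundedness of $n(c/c')^n$ over $n\ge1$ to obtain $nc^n \le K(c)(c')^n$, and then sum the resulting geometric series, absorbing the factor $1/(1-(c')^p)^{1/p} \le 1/(1-c')$ into $d(c)$; the $\ell^p$-monotonicity route is cleaner and furnishes the tidy constant $c/(1-c)^2$.
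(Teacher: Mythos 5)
Your proof is correct, and it takes a genuinely different route from the paper. You factor the summand as $n^p c^{pn} = (nc^n)^p$, so that all the $p$-dependence sits in the outer exponent of the fixed summable sequence $a_n = nc^n$, and then close the argument with the monotonicity $\|a\|_{\ell^p} \le \|a\|_{\ell^1}$ (which you prove correctly via $(a_n/S)^p \le a_n/S$ for $a_n/S \in [0,1]$, $p\ge 1$); this yields the explicit constant $d(c) = c/(1-c)^2$. The paper instead exploits $n^{1/n} \searrow 1$: it fixes $N = N(c)$ with $N^{1/N} \le \frac{c+1}{2c}$, splits the sum at $N$, bounds the head crudely by $N\cdot N^p = N^{p+1}$, and bounds the tail by writing $n^p c^{pn} = (n^{1/n}c)^{pn} \le \bigl(\tfrac{c+1}{2}\bigr)^{pn}$ and summing the geometric series, finally absorbing both pieces into some $d(c)^p$. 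Your argument is shorter, avoids the splitting and the case-by-case bookkeeping, and produces a clean explicit constant, whereas the paper's constant is left implicit; the paper's technique of trading the polynomial factor $n^p$ for a slightly worse geometric ratio is the more generic device (it would survive if the summand were not an exact $p$-th power), but for this particular sum nothing is lost by your approach. Your alternative remark (bounding $nc^n \le K(c)(c')^n$ for fixed $c' \in (c,1)$ and summing the geometric series) is in fact closer in spirit to what the paper does, and is also valid.
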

\begin{proof}
Noting that $n^{\frac{1}n} \searrow 1$ as $n\rightarrow \infty$, we fix any $N\in\mathbb{N}$ such that 
\[ N^{\frac{1}{N}} \le \frac{c+1}{2c}.  \]
Then, we have
\begin{align*}
\sum_{n=1}^\infty n^p c^{pn} &= \sum_{n=1}^N n^p c^{pn} + \sum_{n=N+1}^\infty n^p c^{pn} \le \sum_{n=1}^N N^p 1^{pn} + \sum_{n=N+1}^\infty \left( n^{\frac{1}n} c \right)^{pn}\\
&\le N\cdot N^p + \sum_{n=N+1}^\infty \left( \frac{c+1}{2c}c  \right)^{pn} = N^{p+1} + \left(\frac{c+1}{2}\right)^{(N+1)p} \frac{1}{1 - \left(\frac{c+1}{2}\right)^p } \le d(c)^p
\end{align*}
for some $d(c)>0$.    
\end{proof}

\begin{proof}[Proof of Lemma \ref{lem:stoch}]
It is enough to prove the following two estimates:
\begin{align}
   \mathbb{E}[S_N] \coloneqq \mathbb{E}\left[ \int_{[-\alpha_0,\alpha_0]\backslash\{0\}} \int^T_0 2^{p(\alpha)N}C(\alpha)^{p(\alpha)} \left\| \mathcal{X}^{\alpha,N+1}_t - \mathcal{X}_t^{\alpha,N}  \right\|^{p(\alpha)}_{H^{-1+\epsilon}(\Lambda)}dt \nu(d\alpha)  \right] \lesssim 4^{-N} \label{est:stoch}
\end{align}
and 
\begin{align}
    \mathbb{E}\left[\int_{[-\alpha_0,\alpha_0]\backslash\{0\}} \int^T_0 C(\alpha)^{p(\alpha)} \left\|  \mathcal{X}_t^{\alpha,1}  \right\|^{p(\alpha)}_{H^{-1+\epsilon}(\Lambda)}dt \nu(d\alpha)\right]<\infty. \label{est:stoch2}
\end{align}
Indeed, from \eqref{est:stoch} and Chebyshev's inequality, we have for any $c>0$ that
\begin{align*}
 \mathbb{P}\left( 2^N S_N \ge c\right) &\le \frac{1}{c}\mathbb{E} \left[2^N S_N\right] \lesssim 2^N 4^{-N} = 2^{-N}.
\end{align*}
Therefore, from the Borel-Cantelli lemma, it holds that $ 2^N S_N \rightarrow 0$ as $N\rightarrow \infty$ almost surely. In particular, we have
\begin{align}
    S_N = \int_{[-\alpha_0,\alpha_0]\backslash\{0\}} \int^T_0 2^{p(\alpha)N}C(\alpha)^{p(\alpha)} \left\| \mathcal{X}^{\alpha,N+1}_t - \mathcal{X}_t^{\alpha,N}  \right\|^{p(\alpha)}_{H^{-1+\epsilon}(\Lambda)}dt \nu(d\alpha) \lesssim 2^{-N}
\end{align}
almost surely. Thus, combining it with the elementary inequality $(|a| + |b|)^p \le 2^{p-1}(|a|^p + |b|^p)$, we have
\begin{align*}
    &\int_{[-\alpha_0, \alpha_0]\backslash\{0\}} \left\|C(\alpha)  \mathcal{X}^{\alpha,N}\right\|^{p(\alpha)}_{L^{p(\alpha)}([0,T];H^{-1+\epsilon}(\Lambda))} \nu(d\alpha)\\
    &= \int_{[-\alpha_0, \alpha_0]\backslash\{0\}} \left\|C(\alpha)  \mathcal{X}^{\alpha,1}  + C(\alpha)\sum_{M=1}^{N-1} \left( \mathcal{X}^{\alpha,M+1} - \mathcal{X}^{\alpha,M}  \right) \right\|^{p(\alpha)}_{L^{p(\alpha)}([0,T];H^{-1+\epsilon}(\Lambda))} \nu(d\alpha)\\
    &\le \int_{[-\alpha_0,\alpha_0]\backslash\{0\}}\nu(d\alpha)\left[ 2^{p(\alpha)}\|C(\alpha) \mathcal{X}^{\alpha,1} \|^{p(\alpha)} + \sum_{M=1}^{N-1} 2^{(M+1)p(\alpha)} \|C(\alpha)\left( \mathcal{X}^{\alpha,M+1} - \mathcal{X}^{\alpha,M}  \right)\|^{p(\alpha)}    \right]\\
    &\lesssim \int_{[-\alpha_0,\alpha_0]\backslash\{0\}} \| 2C(\alpha) \mathcal{X}^{\alpha,1}\|^{p(\alpha)}  \nu(d\alpha) + \sum_{M=1}^{N-1}2^{-M}
    \end{align*}
    almost surely. Then, noting that the first term in the last line is finite almost surely in view of \eqref{est:stoch2}, the assertion of the lemma follows.

    The remaining part of the proof is devoted to the proof of \eqref{est:stoch}. The proof of \eqref{est:stoch2} is similar and simpler, so we omit it.
    Because $X_t$ is distributed according to $\mu_0$ for any $t\ge 0$ and $\mathcal{X}_t^{\alpha,N} \coloneqq \exp_{N}^{\Diamond}(\alpha X_t)$, it holds that
    \begin{align}
        \mathbb{E}[S_N] &= \mathbb{E} \left[\int_{[-\alpha_0,\alpha_0]\backslash\{0\}} \int^T_0 2^{p(\alpha)N}C(\alpha)^{p(\alpha)} \left\| \mathcal{X}^{\alpha,N+1}_t - \mathcal{X}_t^{\alpha,N}  \right\|^{p(\alpha)}_{H^{-1+\epsilon}(\Lambda)}dt \nu(d\alpha) \right] \notag\\
        &= T \int \nu(d\alpha) 2^{p(\alpha)N}C(\alpha)^{p(\alpha)} \int \mu_0(d\phi) \left\| \exp_{N+1}^{\Diamond}(\alpha \phi) -\exp_{N}^{\Diamond}(\alpha \phi)\right\|^{p(\alpha)}_{H^{-1+\epsilon}(\Lambda)}\notag\\
        &\eqqcolon T \int \nu(d\alpha) 2^{p(\alpha)N}C(\alpha)^{p(\alpha)} E_N(\alpha).\label{est:sn}
        \end{align}
        In the following, we use the notation
        \[   \Wick{(P_N \phi)^n} \coloneqq H_n(P_N \phi,C_N),  \]
        where $H_n(x,c)$ is the $n$-th Hermite polynomial defined in \eqref{herm} and $C_N$ is the renormalization constant in \eqref{eq:const}.
        Then, from H\"older's inequality, 
        \begin{align}
            E_N(\alpha) &= \int  \left\| \exp_{N+1}^{\Diamond}(\alpha \phi) -\exp_{N}^{\Diamond}(\alpha \phi)\right\|^{p(\alpha)}_{H^{-1+\epsilon}(\Lambda)}\mu_0(d\phi) \notag\\
            &= \int  \left[ \int_\Lambda \left| \langle \nabla \rangle^{-1+\epsilon} \sum_{n=1}^\infty \frac{\alpha^n}{n!} \left\{ \Wick{(P_{N+1}\phi)^n} - \Wick{(P_N\phi )^n}  \right\}    \right|^2dx      \right]^{\frac{p(\alpha)}{2}} \mu_0(d\phi)\notag\\
            &\le \int  |\Lambda|^{\frac{p(\alpha)}2 - 1}   \int_\Lambda \left| \langle \nabla \rangle^{-1+\epsilon} \sum_{n=1}^\infty \frac{\alpha^n}{n!} \left\{ \Wick{(P_{N+1}\phi)^n} - \Wick{(P_N\phi )^n}  \right\}    \right|^{p(\alpha)}dx     \mu_0(d\phi)\notag\\
            &\le |\Lambda|^{\frac{p(\alpha)}2 -1} \int  \mu_0(d\phi) \notag\\
            &\ \ \ \int_\Lambda \left(\sum_{n=1}^\infty n^{-\frac{2p(\alpha)}{p(\alpha)-1}}\right)^{p(\alpha)-1}   \left(\sum_{n=1}^\infty \left(\frac{|\alpha|^n}{n!}\right)^{p(\alpha)}n^{2p(\alpha)} \left|\langle \nabla \rangle^{-1+\epsilon} \left\{ \Wick{(P_{N+1}\phi)^n} - \Wick{(P_N\phi )^n}  \right\}\right|^{p(\alpha)}\right)  dx    \notag\\
            &= |\Lambda|^{\frac{p(\alpha)}2 -1} \zeta\left( \frac{2p(\alpha)}{p(\alpha)-1}  \right)^{p(\alpha)-1} \notag\\
            &\ \times\sum_{n=1}^\infty \left(\frac{|\alpha|^n}{n!}\right)^{p(\alpha)}n^{2p(\alpha)}\int_\Lambda \left( \int\left|\langle \nabla \rangle^{-1+\epsilon} \left\{ \Wick{(P_{N+1}\phi)^n} - \Wick{(P_N\phi )^n} \right\}\right|^{p(\alpha)}\mu_0(d\phi)    \right)       dx,
            \end{align}
            where $\zeta(\cdot)$ is the Riemann zeta function.
            Therefore, from the hypercontractivity of the $n$-th homogeneous Wiener chaos (see e.g. \cite[Theorem 5.10]{Jan97}), we get
            \begin{align}
                E_N(\alpha) &\le C_1^{p(\alpha)}\sum_{n=1}^\infty \left(\frac{|\alpha|^n}{n!}\right)^{p(\alpha)}n^{2p(\alpha)} \left( p(\alpha)-1  \right)^{\frac{np(\alpha)}{2}} \notag\\
            &\ \times\int_\Lambda \left( \int\left|\langle \nabla \rangle^{-1+\epsilon} \left\{ \Wick{(P_{N+1}\phi)^n} - \Wick{(P_N\phi )^n}  \right\}\right|^2\mu_0(d\phi)    \right)^{\frac{p(\alpha)}{2} }dx \label{est:en}
            \end{align}
for some constant $C_1>0$ independent of $N, \alpha$. Moreover, according to the proof of \cite[Theorem 2.2]{HKK21a}, it holds that
\begin{align}
\int\left|\langle \nabla \rangle^{-1+\epsilon} \left\{ \Wick{(P_{N+1}\phi)^n} - \Wick{(P_N\phi )^n}  \right\}\right|^2\mu_0(d\phi) \le C_2(\epsilon,\lambda) A^{-\lambda N} (n!)^2\left(\frac{1+\lambda}{4\pi(1-\epsilon -\lambda)}  \right)^n\label{est:HKK}
\end{align}
for some $C_2>0$ and any $0<\lambda<1-\epsilon$, where $A$ is a positive constant introduced in \eqref{eq:appro}. Note that our definition of the approximation operator $P_N$ in \eqref{eq:appro} is the reason why the factor $A^{-\lambda N}$ appears in \eqref{est:HKK} instead of $2^{-\lambda N}$ like in the proof of \cite[Theorem 2.2]{HKK21a}. 
From \eqref{est:en} and \eqref{est:HKK}, we have
\begin{align}
    E_N(\alpha) &\le C_3(\epsilon,\lambda)^{p(\alpha)}A^{-\frac{\lambda p(\alpha)}{2}N}\sum_{n=1}^\infty n^{2p(\alpha)} \left\{ \alpha^2 (p(\alpha)-1) \frac{1+\lambda}{4\pi(1-\epsilon -\lambda)}    \right\}^{\frac{p(\alpha)}{2}n}\notag\\
    &\le C_3(\epsilon,\lambda)^{p(\alpha)}A^{-\frac{\lambda p(\alpha)}{2}N}\sum_{n=1}^\infty n^{2p(\alpha)} \left\{ \frac{p^2\alpha_0^2}4 \frac{1+\lambda}{4\pi(1-\epsilon -\lambda)}    \right\}^{\frac{p(\alpha)}{2}n}\notag\\
    &\le C_3(\epsilon,\lambda)^{p(\alpha)}A^{-\frac{\lambda p(\alpha)}{2}N}\sum_{n=1}^\infty n^{2p(\alpha)}  C_4(\alpha_0)^{\frac{p(\alpha)}{2}n}\label{est:en2}
    \end{align}
    for some $C_3>0$ and $0<C_4<1$ by taking sufficiently small $\epsilon,\lambda>0$, where we used the inequality 
    \[\alpha^2 (p(\alpha)-1) = \alpha^2\left( \frac{p\alpha_0}{|\alpha|}-1  \right)= p\alpha_0|\alpha|- \alpha^2 \le \frac{p^2\alpha_0^2}4 < 4\pi .\]
Therefore, noting that 
\[\sum_{n=1}^\infty n^{2p(\alpha)}  C_4(\alpha_0)^{\frac{p(\alpha)}{2}n} \le C_5(\alpha_0)^{p(\alpha)}\]
for some $C_5>0$ by Lemma \ref{lem:sum},  we get from \eqref{est:sn} and \eqref{est:en2} that 
\begin{align*}
    \mathbb{E} [S_N] \le T\int_{[-\alpha_0,\alpha_0]\backslash\{0\}} 2^{p(\alpha)N}C(\alpha)^{p(\alpha)} C_6(\epsilon,\lambda,\alpha_0)^{p(\alpha)}A^{-\frac{\lambda p(\alpha)}2 N}    \nu(d\alpha)
\end{align*}
for some $C_6>0$. Then, because 
\[ \sup_{\alpha\in [-\alpha_0,\alpha_0]\backslash \{0\}}C(\alpha)^{p(\alpha)} C_6(\epsilon,\lambda,\alpha_0)^{p(\alpha)} =\sup_{\alpha\in [-\alpha_0,\alpha_0]\backslash \{0\}}C(\alpha)^{\frac{p\alpha_0}{|\alpha|}} C_6(\epsilon,\lambda,\alpha_0)^{\frac{p\alpha_0}{|\alpha|}}< \infty\]
from the condition $0\le C(\alpha) \lesssim |\alpha|$, by taking sufficiently large $A$ such that  
\[ 2^{p(\alpha)N}A^{-\frac{\lambda p(\alpha)}2 N} \le 2^{-p(\alpha)N} (\le 4^{-N}) \iff A \ge 2^{\frac{4}{\lambda}},  \]
we get \eqref{est:stoch}.

\end{proof}

\section{A remark on the Dirichlet form approach}\label{sec:dirichlet}
In this section, the proof of Thoerem \ref{thm2} is explained. In the following, we often omit the details and just refer to the relevant parts in \cite{HKK21a,HKK23}, as the many parts of the proof are almost identical to the corresponding proofs in those works.
We begin with the following lemma whose proof is almost the same as \cite[Theorem 2.1]{HKK23}.
Recall that we defined 
\[\exp_N^{\Diamond}(\alpha \phi)\coloneqq \exp\left(\alpha P_N \phi - \frac{\alpha^2}2C_N\right) \]
in Section \ref{sec:pre}.

\begin{lemm}
Let $\alpha_0^2<8\pi$ and let $\nu$ be any nonnegative finite Borel measure on $[-\alpha_0,\alpha_0]$. Then, we have 
\[ \int_{[-\alpha_0,\alpha_0]} \exp_N^{\Diamond}(\alpha \phi)\nu(d\alpha) \rightarrow \int_{[-\alpha_0,\alpha_0]} \exp^{\Diamond}(\alpha \phi)\nu(d\alpha)\]
as $N\rightarrow \infty$ for $\mu_0$-almost every $\phi$ in the Besov space $B_{p,p}^{-\beta}(\Lambda)$ for any $p>1$ and $\beta \in (\frac{\alpha^2}{4\pi}(p-1),1)$.
\end{lemm}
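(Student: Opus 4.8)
The plan is to reduce the statement to the single-$\alpha$ convergence results already packaged in Lemma~\ref{lem:ou-wick}(ii), and then to upgrade from a fixed $\alpha$ to the $\nu$-integrated quantity by a uniform-in-$\alpha$ estimate plus a Borel--Cantelli argument, in the same spirit as the proof of Lemma~\ref{lem:stoch}. First I would fix the target exponents: given $p>1$ and $\beta\in(\frac{\alpha_0^2}{4\pi}(p-1),1)$, note that since $\alpha^2\le\alpha_0^2$ for every $\alpha\in[-\alpha_0,\alpha_0]$, the same $\beta$ lies in the admissible range $(\frac{\alpha^2}{4\pi}(p-1),1)$ for \emph{every} $\alpha$ simultaneously. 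This uniformity is the reason one can hope for a single exceptional $\mu_0$-null set working for all $\alpha$ at once, which is exactly what the statement (convergence for $\mu_0$-a.e.\ $\phi$ of the $\nu$-integral) demands.

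The core estimate I would establish is a dyadic-difference bound of the form
\begin{equation}
\int_{B_{p,p}^{-\beta}(\Lambda)}\left\|\int_{[-\alpha_0,\alpha_0]}\left(\exp_{N+1}^{\Diamond}(\alpha\phi)-\exp_N^{\Diamond}(\alpha\phi)\right)\nu(d\alpha)\right\|_{B_{p,p}^{-\beta}(\Lambda)}^{p}\mu_0(d\phi)\lesssim A^{-\kappa N}\notag
\end{equation}
for some $\kappa>0$, uniformly in $N$. To get this I would first use Minkowski's integral inequality to pull the $\nu$-integral outside the $B_{p,p}^{-\beta}$-norm and then outside the $\mu_0$-expectation, reducing matters to controlling, for each fixed $\alpha$, the quantity $\int \|\exp_{N+1}^{\Diamond}(\alpha\phi)-\exp_N^{\Diamond}(\alpha\phi)\|_{B_{p,p}^{-\beta}}^{p}\,\mu_0(d\phi)$. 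Expanding the Wick exponential into its Hermite/Wiener-chaos series $\sum_n \frac{\alpha^n}{n!}\Wick{(P_N\phi)^n}$ and invoking hypercontractivity together with the chaos estimate \eqref{est:HKK} (the same input used in Lemma~\ref{lem:stoch}) gives geometric decay $A^{-\kappa N}$ in $N$, with a constant that is bounded in $\alpha$ precisely because $\alpha^2\le\alpha_0^2$ keeps the relevant ratio $\frac{\alpha^2(1+\lambda)}{4\pi(1-\beta-\lambda)}$ strictly below $1$ after choosing $\lambda$ small. Since $\nu$ is a finite measure, integrating the uniform bound against $\nu$ preserves the decay.

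With the summable-in-$N$ dyadic bound in hand, the convergence of $\int \exp_N^{\Diamond}(\alpha\phi)\,\nu(d\alpha)$ in $B_{p,p}^{-\beta}(\Lambda)$ for $\mu_0$-a.e.\ $\phi$ follows by a telescoping/Borel--Cantelli argument identical in structure to the passage following \eqref{est:stoch}: Chebyshev turns the $L^p(\mu_0)$-bound $\lesssim A^{-\kappa N}$ into a summable tail probability, Borel--Cantelli yields a.s.\ summability of the consecutive differences, and the triangle inequality then shows $\{\int\exp_N^{\Diamond}(\alpha\phi)\nu(d\alpha)\}_N$ is Cauchy in $B_{p,p}^{-\beta}$ for $\mu_0$-a.e.\ $\phi$. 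The limit must coincide with $\int\exp^{\Diamond}(\alpha\phi)\nu(d\alpha)$ because, by Lemma~\ref{lem:ou-wick}(ii), $\exp_N^{\Diamond}(\alpha\phi)\to\exp^{\Diamond}(\alpha\phi)$ in $B_{p,p}^{-\beta}$ for $\mu_0$-a.e.\ $\phi$ and each fixed $\alpha$, and the uniform $\nu$-integrable bound licenses passing the limit through the $\nu$-integral. I expect the main obstacle to be bookkeeping the uniformity in $\alpha$: one must verify that the constants $C_2(\epsilon,\lambda)$, the chaos-series sum (controlled via Lemma~\ref{lem:sum}), and the base of the geometric ratio can all be chosen independently of $\alpha\in[-\alpha_0,\alpha_0]$, so that the single decay rate $A^{-\kappa N}$ survives the $\nu$-integration and produces \emph{one} exceptional null set rather than an $\alpha$-dependent family of them.
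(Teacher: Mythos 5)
Your reduction of the $\nu$-integral to a fixed-$\alpha$ estimate (Minkowski, finiteness of $\nu$, one Borel--Cantelli over a single null set) is the easy and unproblematic part of the argument. The genuine gap is in your core estimate: the chaos-expansion/hypercontractivity machinery you import from Lemma~\ref{lem:stoch} via \eqref{est:HKK} is an $L^2(\mu_0)$ technique, and it is structurally incapable of reaching the range $4\pi\le\alpha_0^2<8\pi$ that the statement claims. Concretely, after hypercontractivity at exponent $q=\max(p,2)$ the geometric ratio governing the sum over chaos levels is $\alpha^2(q-1)\frac{1+\lambda}{4\pi(1-\epsilon-\lambda)}$, which is at least $\frac{\alpha^2}{4\pi}$ no matter how $\epsilon,\lambda$ are chosen (the factor $q-1\ge 1$ cannot be removed); so your claim that ``$\alpha^2\le\alpha_0^2$ keeps the relevant ratio strictly below $1$'' is simply false once $\alpha_0^2\ge 4\pi$, and the series over $n$ diverges. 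This is not a bookkeeping issue but a fundamental one: for $\alpha^2\ge 4\pi$ the two-point function of $\exp_N^{\Diamond}(\alpha\phi)$ behaves like $|x-y|^{-\alpha^2/2\pi}$ with $\alpha^2/2\pi\ge 2$, which is not locally integrable in two dimensions, so $\langle\exp_N^{\Diamond}(\alpha\phi),f\rangle$ has variance diverging as $N\to\infty$ for any nontrivial test function $f$; no $L^2$-based bound of the form \eqref{est:HKK} can be summed into a statement about the exponential. A secondary defect: even in the regime $\alpha_0^2<4\pi$, for $1<p<2$ your Jensen/hypercontractivity reduction yields only $\beta>\frac{\alpha^2}{4\pi}$, not the claimed sharper range $\beta>\frac{\alpha^2}{4\pi}(p-1)$; and since the statement is non-vacuous for $\alpha_0^2\ge 4\pi$ only when $p<2$, there is no choice of exponents that rescues the approach.

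The paper's own proof is a deferral: it states that the argument is ``almost the same as [HKK23, Theorem 2.1]'', i.e.\ the fixed-$\alpha$ construction of the Wick exponential in the full $L^1$-regime, with the $\nu$-integration handled exactly by the kind of uniformity-in-$\alpha$ observation you make. The point is that the proof in \cite{HKK23} is a genuinely $L^p$ argument for $p\in(1,2)$, exploiting the positivity of $\exp_N^{\Diamond}(\alpha\phi)$ (fractional-moment/Kahane-type estimates on multipoint correlations) rather than the Wiener-chaos decomposition; this is precisely what produces the condition $\beta>\frac{\alpha^2}{4\pi}(p-1)$ and what makes $\alpha^2<8\pi$ accessible. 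The dichotomy is visible in the structure of the present paper itself: the chaos/hypercontractivity estimates of Section~\ref{sec:stoch} are what confine Theorem~\ref{thm1} to the $L^2$-regime $\alpha_0^2<4\pi$, whereas the measure-level statement underlying Theorem~\ref{thm2} reaches $8\pi$ only by invoking the positivity-based argument of \cite{HKK23}. To repair your proposal you would have to replace the core estimate by that $L^p$ argument (or restrict the lemma to $\alpha_0^2<4\pi$ and $\beta>\frac{\alpha_0^2}{4\pi}$, which is weaker than what is claimed and needed).
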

Since $\int \exp^{\Diamond}(\alpha \phi)\nu(d\alpha)$ is a nonnegative distribution, this lemma implies that the probability measure $\mu^{(\nu)}$ in \eqref{eq:measure} is well-defined and absolutely continuous with respect to $\mu_0$.
Therefore, we can consider the pre-Dirichlet form $\mathcal{E}$ defined by
    \begin{equation}
        \mathcal{E}\left(F, G\right) \coloneqq \frac{1}{2}\int_{E} \langle \nabla F (\phi) , \nabla G(\phi)\rangle_{L^2(\Lambda)} \mu^{(\nu)}(d\phi),\ \ F,G \in \mathfrak{F}C^\infty_b \subset L^2(\mu^{(\nu)}),
    \end{equation}
    where $\mathfrak{F}C^\infty_b$ is the space of all bounded smooth cylindrical functionals on $E \coloneqq H^{-\beta}(\Lambda)$ ($\beta\in(0,1)$) defined by
    \begin{align*}
        \mathfrak{F}C_b^\infty \coloneqq \left\{F:E\rightarrow \mathbb{R}\ ;\ \exists m\  \exists f \in C_b^\infty(\mathbb{R}^m)\ \exists \{ l_i \}_{i=1}^m \subset C^\infty(\Lambda)\ \mbox{s.t.}\ F(\phi) = f(\langle \phi, l_1 \rangle, \cdots, \langle \phi, l_m\rangle )   \right\}.
    \end{align*}
    Here, $\langle \cdot,\cdot\rangle$ denotes the duality between $\mathcal{D}'(\Lambda)$ and $\mathcal{D}(\Lambda)$
. The $L^2(\Lambda)$-derivative $\nabla F: E \rightarrow L^2(\Lambda)$ for $F \in \mathfrak{F}C_b^\infty$ of the form in the above definition is given by
    \[ \nabla F(\phi) = \sum_{i=1}^m (\partial_{i}f) (\langle \phi, l_1 \rangle, \cdots, \langle \phi, l_m\rangle)l_i, \ \ \ \phi\in E.     \]
    Then, similarly to the standard $\exp{(\Phi)_2}$-measure, $\mu^{(\nu)}$ satisfies the following integration by parts formula:
    \begin{align}
        \mathcal{E}(F,G) = -\frac{1}{2}\int_E G(\phi) \mathcal{L}^{(\nu)}F(\phi) \mu^{(\nu)}(d\phi),
    \end{align}
    where $\mathcal{L}^{(\nu)}$ is a pre-Dirichlet operator defined by
    \begin{align*}
     \mathcal{L}^{(\nu)}F(\phi) \coloneqq \frac{1}{2}\mbox{Tr}\left( \nabla^2F(\phi) \right) -\frac{1}{2}\left\langle  (1-\Delta)\phi, \nabla F(\phi)\right\rangle-\frac{1}{2}\big\langle \int \alpha \exp^{\Diamond}(\alpha \phi)\nu(d\alpha) , \nabla F(\phi)\big\rangle,
    \end{align*}
see the proof of \cite[Proposition 6.1]{HKK23}.
Moreover, from the same proof as in \cite[Proposition 6.1]{HKK23}, we have $\mathcal{L}^{(\nu)}F(\phi) \in L^2(E,\mu^{(\nu)})$ for any $\alpha_0^2< 8\pi$. Thus, by applying the general method of Dirichlet forms for SPDE developed in \cite{AR91} as in \cite[Section 6]{HKK23}, there exists a suitable $\beta\in(0,1)$ such that $(\mathcal{E},\mathfrak{F}C_b^\infty)$ is closable in $L^2(\mu^{(\nu)})$ and the closure $(\bar{\mathcal{E}},\mathcal{D}(\bar{\mathcal{E}}))$ is a quasi-regular Dirichlet form. In particular, there exists a $E$-valued diffusion process $M=(\Theta, \mathcal{G}, (\mathcal{G})_{t\ge0}, (\Psi_t)_{t\ge 0}, (\mathbb{Q}_\phi)_{\phi\in E})$ properly associated with $\bar{\mathcal{E}}$ in the sense that 
\begin{align}
    P_t F(\phi) = \mathbb{E}^{\mathbb{Q}_\phi}[F(\Psi_t) ]\ \ \ \ \ \mu^{(\nu)}\mbox{-a.e.}\ \phi\in E \label{eqdirichlet1}
\end{align}
for any bounded measurable function $F$ on $E$ and $t\ge 0$, where $(P_t)_{t\ge 0}$ is the Markov semigroup on $L^2(\mu^{(\nu)})$ corresponding to $\bar{\mathcal{E}}$.

From now on, we assume that $\alpha_0^2<4\pi$ and $\beta \in (\frac{\alpha_0^2}{4\pi},1)$.
By a straightforward modification of the argument in \cite[Section 5]{HKK21a} using Lemma \ref{lem:conv}, we can see that there exists a $L^2(\Lambda)$-cylindrical $(\mathcal{G}_t)_{t\ge 0}$-Brownian motion $\mathcal{W}$ on the probability space $(\Theta, \mathcal{G},\mathbb{Q}_\phi)$ such that 
\begin{align}
    \Psi_t = \Phi_t(\phi)\ \ \ \ \ \mathbb{Q}_\phi\mbox{-a.s.},\ \mu^{(\nu)}\mbox{-a.e.}\ \phi,  \label{eqdirichlet2}
\end{align}
where $\Phi$ is the strong solution of \eqref{eq1} driven by $\mathcal{W}$. More precisely, we define $\Phi_t(\phi)\coloneqq \lim_{N\rightarrow\infty}\Phi^N_t(\phi)$ noting that the limit exists in $C([0,T];H^{-\beta}(\Lambda))$ $\mathbb{Q}_\phi$-almost surely for $\mu^{(\nu)}$-almost every $\phi \in E$, where $\Phi^N(\phi)$ is the solution to
\begin{align*}
\begin{cases}
\partial_t \Phi^N_t = \frac{1}{2} (\Delta - 1)\Phi^N_t - \frac{1}{2} \int_{[-\alpha_0, \alpha_0]}\alpha\exp_N^{\Diamond}{(\alpha \Phi^N_t)}\nu(d\alpha) + P_N\dot{\mathcal{W}}_t \label{eq1} \\
\Phi^N_0 = P_N \phi,
\end{cases}
\end{align*}
see Remark \ref{rem:thm1}.
Therefore, from \eqref{eqdirichlet1} and \eqref{eqdirichlet2}, we get
\begin{align*}
P_t F(\phi) = \mathbb{E}^{\mathbb{Q}_\phi}[F(\Phi_t (\phi)) ]\ \ \ \ \ \mu^{(\nu)}\mbox{-a.e.}\ \phi\in E 
\end{align*}
for any bounded measurable function $F$ on $E$ and $t\ge 0$.

\ \\
\noindent \textbf{Acknowledgement.}
This work is supported by JSPS KAKENHI Grant Numbers 24KJ1329, 19H00643, 22H00099, 23K03155, and 23K20801.

\bibliographystyle{alpha}

\bibliography{weighted_exp}

\end{document}